\theoremstyle{plain}
\newtheorem{thm}{Theorem}[section]
\newtheorem{lemma}[thm]{Lemma}
\newtheorem{prop}[thm]{Proposition}
\newtheorem{cor}[thm]{Corollary}
\newtheorem*{question*}{Question}
\newtheorem*{theorem*}{Theorem}
\theoremstyle{plain}
\newtheorem{thmintro}{Theorem}
\theoremstyle{definition}
\newtheorem{defin}[thm]{Definition}
\newtheorem{rem}[thm]{Remark}
\newtheorem{setup}[thm]{Setup}
\newcommand{\R}{{\mathbb{R}}}
\newcommand{\C}{{\mathbb{C}}}
\newcommand{\Z}{{\mathbb{Z}}}
\newcommand{\N}{{\mathbb{N}}} 
\newcommand{\F}{{\mathbb{F}}}
\newcommand{\cA}{{\mathcal{A}}}
\newcommand{\cT}{\mathcal{T}}
\newcommand{\cN}{\mathcal{N}}
\newcommand{\indCZ}{\operatorname{CZ}}
\newcommand{\indRS}{\operatorname{RS}}
\newcommand{\ind}{\operatorname{ind}}
\newcommand{\Sp}{\operatorname{Sp}}
\title{Periodicity characterization by capacities for star-shaped domains}
\author{Jean Gutt, Vinicius G. B. Ramos and Shira Tanny}
\date{}
\begin{document}

\maketitle
\begin{abstract}
We complete the spectral characterization of Besse and Zoll Reeb flows on the standard contact sphere \(S^{2n-1}\) 
initiated by Ginzburg--G\"urel--Mazzucchelli. Roughly speaking, it states that a Reeb flow on the boundary of any star-shaped domain in $\R^{2n}$ is Besse if and only if it has $n$ coinciding Ekeland--Hofer capacities, and that it is Zoll if and only if the first $n$ capacities coincide. 
\end{abstract}

\section{Introduction and results}
A closed contact manifold $(Y,\alpha)$ is called \emph{Besse} if all its Reeb orbits are closed, and \emph{Zoll} if, in addition, all  orbits have the same minimal period. On $Y=S^{2n-1}\subset \mathbb{R}^{2n}$ with its standard contact structure, contact forms are in bijection with star-shaped domains in $\R^{2n}$. Here and throughout a star-shaped domain is a contractible domain containing the origin, whose boundary is smooth and transverse to the radial vector field\footnote{In other texts, such domains are sometimes called ``strictly star-shaped" or domains whose boundary is a ``restricted contact type hypersurface". }. 

In recent years there have been several works aimed at characterizing and classifying Besse or Zoll flows. In fact, it is an open question whether on $S^{2n-1}$ with the standard structure, there are other Besse contact forms beyond boundaries of ellipsoids and Zoll contact forms beyond the round sphere \cite[p. 2128]{mazzucchelli2023structure}. In dimension 3, classification results for Besse and Zoll contact forms (on the sphere as well as on other contact manifolds) were obtained by Cristofaro-Gardiner and Mazzucchelli  \cite{cristofaro2020action} using Embedded Contact Homology and its capacities. The uniqueness of a Zoll contact form on the 5-sphere was proved recently in \cite{chaidez2025zoll}.  In higher dimensions, a characterization of Besse and Zoll star-shaped domains 
via capacities or spectral invariants was studied by Ginzburg, G\"urel and Mazzucchelli in \cite{ginzburg2021spectral}. For convex domains, they obtained a complete characterization: The flow is periodic if and only if there are $n$ coinciding spectral invariants for the Clarke dual action functional.
For non-convex star-shaped domains they use the Ekeland--Hofer capacities $\{c_k^{EH}\}_{k\in \N}$:
\begin{theorem*}[{\cite[Theorem 1.5]{ginzburg2021spectral}}]
    If $W$ is star-shaped  with discrete action spectrum and $c_{k}^{EH} (W) = \cdots = c_{k+n-1}^{EH}(W)$ for some $k\in \N$ then the Reeb flow on $\partial W$ is Besse and $c_k^{EH}(W)$ is a common period of the flow.
\end{theorem*}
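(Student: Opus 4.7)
The plan is to lean on two properties of the Ekeland--Hofer capacities beyond their axiomatic characterization. The first is \emph{spectrality}: on a star-shaped domain $W$ with discrete action spectrum, each value $c_k^{EH}(W)$ is attained as the action of some closed Reeb orbit on $\partial W$, its \emph{carrier} $\gamma_k$. The second is an \emph{index constraint}: the mean Conley--Zehnder index of $\gamma_k$ is pinned down by the min-max cohomology class selecting $c_k^{EH}$ and satisfies $\bar\Delta(\gamma_k) = 2k + n - 1$ (up to the usual convention shift), or at least lies in a small interval around this value. The first step of the proof is to verify or invoke both properties in the present setting; spectrality in particular relies on the discreteness of the action spectrum, which prevents $c_k^{EH}(W)$ from being merely a limit of actions rather than an action itself.

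Setting $T := c_k^{EH}(W) = \cdots = c_{k+n-1}^{EH}(W)$, the two inputs immediately produce $n$ closed Reeb orbits $\gamma_k, \ldots, \gamma_{k+n-1}$ on $\partial W$, all of action $T$, with mean indices forming an arithmetic progression of length $n$ and common step $2$. These orbits need not be geometrically distinct: they could in principle be iterates of a handful of simple orbits. The remaining content of the theorem is a rigidity statement --- that so much index resonance concentrated at a single action level forces the entire Reeb flow on $\partial W$ to be periodic with common period $T$.

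This final step is both the heart of the argument and its main obstacle. A natural approach is perturbative: suppose toward a contradiction that the flow is not Besse with $T$ as a common period, so that there exists $p \in \partial W$ whose Reeb trajectory fails to return to $p$ by time $T$. Construct a $C^2$-small, compactly supported modification of the contact form in a flow box disjoint from the union $\gamma_k \cup \cdots \cup \gamma_{k+n-1}$, designed so that the carriers persist as carriers for the perturbed capacities. By the monotonicity and continuity of $c_k^{EH}$, combined with Long's iteration inequalities, which control how the Conley--Zehnder indices of iterates of a single simple orbit propagate, one can then arrange that at least two of the perturbed values $c_{k+j}^{EH}(W')$ strictly separate, contradicting the inherited equalities.

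The principal difficulty is showing that the perturbation really does separate the capacities rather than simultaneously shifting them: one must either produce enough freedom in the choice of perturbation to force a spread at some index, or rule out by direct index-theoretic means the scenario in which all $n$ carriers are iterates of a single simple orbit whose mean indices happen to land on the required arithmetic progression without encoding any global periodicity. Overcoming this obstacle --- translating a purely spectral coincidence into global dynamical information --- is exactly the rigidity that the theorem asserts.
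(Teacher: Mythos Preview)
The paper does not prove this statement at all: it is quoted verbatim as Theorem~1.5 of Ginzburg--G\"urel--Mazzucchelli and used as a black box (see the introduction and the proof of Theorem~\ref{thm:Zoll}). So there is no ``paper's own proof'' to compare against; the comparison has to be with the argument in \cite{ginzburg2021spectral}.

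That said, your proposal has a genuine gap, and it is not the approach taken in \cite{ginzburg2021spectral}. The actual proof is a Lusternik--Schnirelmann/Fadell--Rabinowitz index argument: the Ekeland--Hofer capacities are $S^1$-equivariant min-max values of a single functional, and the classical LS lemma says that if $n$ consecutive min-max values coincide at level $T$, then the critical set at level $T$ has cohomological (Fadell--Rabinowitz) index at least $n$. Since critical points at level $T$ are exactly the $T$-periodic Reeb orbits, and since a subset of $S^{2n-1}$ with $S^1$-index $\ge n$ must be all of $S^{2n-1}$, every point lies on a closed Reeb orbit of period $T$. No perturbation, no carrier-by-carrier index bookkeeping.

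Your route, by contrast, never leaves the level of individual carrier orbits, and that is why it stalls. Two specific problems. First, the index input is overstated: graded spectrality only tells you that $n-1+2j$ lies in the index window $[\mu_-(\gamma_j),\mu_+(\gamma_j)]$ of some carrier of action $T$; it does not pin down the \emph{mean} index to be $n-1+2j$, so you do not get an arithmetic progression of mean indices, and in particular nothing prevents all $n$ carriers from being the same orbit. Second, the perturbation step is the entire content and you leave it open. A $C^2$-small perturbation supported away from finitely many orbits changes all $c_j^{EH}$ by $O(\varepsilon)$ simultaneously; there is no mechanism in your outline that forces a \emph{spread}, and Long's iteration inequalities constrain indices of iterates of a fixed orbit, not the behaviour of capacities under ambient perturbation. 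The LS argument bypasses all of this by working with the size of the critical \emph{set} rather than with finitely many selected critical \emph{points}.
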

Moreover, they show that $k$ can be obtained from the minimal Conley--Zehnder index  $\mu_-(S_\cT)$  of the orbits of period $\cT=c_k^{EH}(W)$ (see Section~\ref{sec:CZ} for a definition):
$$k=\frac{1}{2}(\mu_-(S_\cT)-n+1).$$
They state the converse direction of the above statement as an open question:
\begin{question*}[{\cite[Question 1.1]{ginzburg2021spectral}}]
    Suppose $W$ is a star-shaped domain with Besse Reeb flow, of common period $\cT$, is it true that 
    \[c_{k}^{EH} (W) = \cdots = c_{k+n-1}^{EH}(W)=\cT \quad \text{for }k=\frac{1}{2}(\mu_-(S_\cT)-n+1)\ ? \]
\end{question*}
As mentioned above, for convex domains this statement was proved in \cite[Theorem 1.2]{ginzburg2021spectral}\footnote{For convex domains the Clarke action spectral invariants coincide with the Ekeland--Hofer capacities by combining \cite{matijevic2024positive} and \cite{gutt2024equivalence}.}. The main purpose of this note is to provide an affirmative answer for a general star-shaped domain. 

We study the characterization of periodicity via equivariant symplectic homology capacities, which we denote by $\{c_k\}_{k=1}^\infty$. These were recently shown to coincide with the Ekeland–Hofer capacities on star-shaped domains \cite{gutt2024equivalence}. Our main results are the following.
\begin{thmintro}\label{thm:Besse}
    Let $W\subset \R^{2n}$ be a Besse star-shaped domain whose Reeb orbits have a common period $\cT$, then there exists $k$ such that 
    \[
    c_k(\alpha)=\cdots = c_{k+n-1}(\alpha) = \cT.
    \]
\end{thmintro}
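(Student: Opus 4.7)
The plan is to compute the positive $S^1$-equivariant symplectic homology $SH^{S^1,+}_*(W)$ around the action level $\cT$ and show that it gains exactly $n$ new generators at that action value, sitting in $n$ consecutive degrees. Since $c_k=c_k^{EH}$ on star-shaped domains by \cite{gutt2024equivalence}, and each $c_k(\alpha)$ is the infimum of actions at which the degree-$(2k+n-1)$ class of $SH^{S^1,+}_*(W)$ becomes representable in the action-filtered equivariant homology, such a block jump of size $n$ at the single action value $\cT$ directly yields $c_k(\alpha)=\cdots=c_{k+n-1}(\alpha)=\cT$.

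First I would set up the Morse-Bott picture at action $\cT$. Because the flow is Besse with common period $\cT$, every point of $\partial W$ lies on a closed Reeb orbit of period $\cT$ (primitive or iterated), so $\partial W$ itself is the Morse-Bott critical submanifold of the action functional at level $\cT$, carrying the $S^1$-action by the Reeb flow (locally free, with at worst cyclic isotropies at orbits of minimal period $\cT/j$). Via the Bourgeois-Oancea Morse-Bott spectral sequence for equivariant symplectic homology, the contribution of this critical level is controlled by $H^{S^1}_*(\partial W;\Q)$. Since $\partial W$ is a rational homology $(2n-1)$-sphere, the Gysin sequence for the $S^1$-orbibundle $\partial W\to\partial W/S^1$ gives
\[ \dim_\Q H^{S^1}_*(\partial W;\Q) \;=\; \dim_\Q H^*(\partial W/S^1;\Q) \;=\; n, \]
with generators in the $n$ consecutive even degrees $0,2,\ldots,2(n-1)$.

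Next I would translate this into capacities. After the Morse-Bott index shift, the $n$ equivariant classes above produce $n$ generators in $SH^{S^1,+}$ of action $\cT$ sitting in the degrees $\mu_-(S_\cT),\mu_-(S_\cT)+2,\ldots,\mu_-(S_\cT)+2(n-1)$. Under the standard correspondence between capacity index and homological degree (namely $c_k$ at degree $2k+n-1$), this gives $c_k(\alpha)=\cdots=c_{k+n-1}(\alpha)=\cT$ for $k=\tfrac{1}{2}(\mu_-(S_\cT)-n+1)$, consistent with the value of $k$ appearing in the forward direction of Ginzburg-G\"urel-Mazzucchelli.

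The main obstacle will be showing that all $n$ Morse-Bott classes survive to the $E_\infty$-page, i.e.\ that no spectral-sequence differential cancels any of them. Incoming differentials would originate at action levels $<\cT$ (iterates of strictly shorter orbits, which appear in non-Zoll Besse flows) and outgoing ones target higher action levels, so ruling them out requires a careful accounting of Conley-Zehnder indices across orbit strata, together with the rational-homology-sphere property of $\partial W$. I expect the argument to reduce rationally to the Zoll model: the near-freeness of the Reeb $S^1$-action, combined with the rank-$n$ Gysin computation above, should force the Besse situation to produce exactly the $n$ surviving classes claimed, giving the desired block of coincident capacities.
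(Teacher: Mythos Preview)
Your Morse--Bott spectral sequence setup and the computation of the equivariant contribution at level $\cT$ are correct, and you have correctly identified the obstacle: one must rule out spectral sequence differentials that would kill some of the $n$ classes at action $\cT$, or (equivalently from the capacity viewpoint) rule out that a lower action level $T<\cT$ already produces a surviving class in one of the degrees $\mu_-(S_\cT),\mu_-(S_\cT)+2,\ldots,\mu_+(S_\cT)$. However, your proposed resolution --- ``reduce rationally to the Zoll model'' via near-freeness of the $S^1$-action and the Gysin computation --- does not address this. The Gysin argument only computes the local contribution of each Morse--Bott level; it says nothing about the relative positions of the index windows $[\mu_-(S_T),\mu_+(S_T)]$ for different $T$, and hence nothing about the differentials between levels. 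Without additional index control, there is no mechanism preventing a smaller-period family $S_T$ from landing a surviving class in one of the $n$ degrees you need, which would force $c_j(\alpha)<\cT$ for that $j$.

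The paper supplies precisely this missing index control, but by a route quite different from yours. The key new lemma is that every Besse star-shaped domain is \emph{dynamically convex} (Proposition~\ref{prop:Besse_is_dyn_conv}): $\mu_-(S)\ge n+1$ for every Morse--Bott component $S$. With this in hand, the paper works with \emph{non-equivariant} filtered $SH$ and adapts the barcode methods of \c{C}ineli--Ginzburg--G\"urel to show that $\dim SH^t_*(W;\F)=1$ for all $t>0$ (Proposition~\ref{prop:SH_1_dim}), which forces a ``diagonal'' barcode and yields the crucial fact (Corollary~\ref{cor:no_index_overlap}) that the index intervals $[\mu_-(S_{T_1}),\mu_+(S_{T_1})]$ and $[\mu_-(S_{T_2}),\mu_+(S_{T_2})]$ are disjoint whenever $T_1\neq T_2$. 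Once the index windows are disjoint, graded spectrality of the $c_k$ immediately gives the block of $n$ coincident capacities. So the missing idea in your outline is dynamical convexity (or some equivalent a priori separation of index windows); the appeal to the rational topology of $\partial W/S^1$ alone does not furnish it.
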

    In fact, $k=\frac{1}{2}(\mu_-(S_\cT) -n+1)$, as before. This result gives an affirmative answer to Question 1.1 from \cite{ginzburg2021spectral}.
    Theorem~\ref{thm:Besse} follows immediately form the following stronger statement:
\begin{thmintro}\label{thm:Besse_MB}
     Let $W\subset \R^{2n}$ be a Besse star-shaped domain and suppose that the manifold of (parametrized) Reeb orbits of period $T$ is of dimension $2d+1$. Then
    \[
    c_k(\alpha)=\cdots = c_{k+d}(\alpha) = T \quad \text{for} \quad k=\frac{1}{2}(\mu_-(S_T) -n+1).
    \]
\end{thmintro}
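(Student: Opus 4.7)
The plan is to compute the action-filtered positive $S^1$-equivariant symplectic homology of $W$ directly in a Morse--Bott framework adapted to the Besse contact form. By \cite{gutt2024equivalence}, the capacities $c_k$ coincide with the Ekeland--Hofer capacities, and they admit a spectral description: $c_k(W)$ is the smallest action $a$ such that the class $U^{k-1}\cdot\eta\in SH^{S^1,+,\,\leq a}(W)$ is nontrivial in the appropriate degree, where $\eta$ is a distinguished generator coming from the positive part of equivariant symplectic homology and $U$ is the $H^*(BS^1)$-module endomorphism that shifts degree by $2$.

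The technical heart of the proof will be a local computation of $SH^{S^1,+,(T-\epsilon,\,T+\epsilon)}(W)$ at the action level $T$. The manifold of parametrized period-$T$ orbits $S_T$ is a Morse--Bott critical submanifold of the action functional on the free loop space, with $S^1$-quotient $B_T=S_T/S^1$ a closed manifold of dimension $2d$. Using either a Morse--Bott spectral sequence for $SH^{S^1,+}$ or, equivalently, a small non-degenerate perturbation of $\alpha$ whose orbits near $S_T$ correspond to critical points of an auxiliary Morse function on $B_T$ (with Conley--Zehnder indices in $[\mu_-(S_T),\mu_-(S_T)+2d]$), one identifies the contribution of this Morse--Bott family with $H_*(B_T;\mathcal{O})$ for a local orientation system $\mathcal{O}$ accounting for any bad-orbit phenomena, placed in a range of consecutive degrees determined by $\mu_-(S_T)$. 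The crucial point is that the $U$-map lifts the lowest-degree class of this block through its higher-degree classes, producing $d+1$ nontrivial classes at action exactly $T$ in the consecutive degrees corresponding to $c_k,c_{k+1},\ldots,c_{k+d}$ when $k=\frac{1}{2}(\mu_-(S_T)-n+1)$; this gives the upper bound $c_{k+j}(W)\leq T$ for $j=0,\ldots,d$.

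The main obstacle will be the matching lower bound $c_k(W)\geq T$, which amounts to excluding classes of the same degree at action strictly below $T$. Every action in $(0,T)$ comes from a shorter Reeb orbit or one of its iterates, and the iteration formula for the Robbin--Salamon index --- together with a careful accounting of which iterates are bad --- must be used to show that no contribution of the relevant degree can arise below $T$. The specific choice $k=\frac{1}{2}(\mu_-(S_T)-n+1)$ enters essentially at this stage: it is precisely the smallest $k$ for which no lower-action class can lie in the target degree, so that the spectral value of the corresponding capacity is genuinely achieved at $T$. Combining the upper and lower bounds then yields $c_{k+j}(W)=T$ for $j=0,\ldots,d$, as claimed.
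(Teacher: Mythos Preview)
Your proposal correctly identifies the two-sided structure (upper and lower bounds on $c_{k+j}$), but the lower-bound step is where the real content lies and your treatment of it is a genuine gap. Saying that ``the iteration formula for the Robbin--Salamon index'' excludes lower-action contributions in the target degrees does not work on its own: a priori nothing prevents the index interval $[\mu_-(S_{T'}),\mu_+(S_{T'})]$ for some $T'<T$ from overlapping $[\mu_-(S_T),\mu_+(S_T)]$, since the half-dimension $d(S_{T'})$ can vary and the mean index alone does not separate these ranges. The paper establishes precisely this non-overlap (Corollary~\ref{cor:no_index_overlap}) as the decisive step, and getting there requires two substantial inputs you do not mention: first, that every Besse star-shaped domain is dynamically convex (Proposition~\ref{prop:Besse_is_dyn_conv}), a new result proved via a barcode-and-parity argument; second, that $\dim SH_*^t(W;\F)=1$ for all $t>0$ (Proposition~\ref{prop:SH_1_dim}), obtained by adapting the Smith-inequality and index-recurrence machinery of \cite{cineli2024closed} to the Besse setting. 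Once those are in hand, graded spectrality of the $c_k$ together with the no-index-overlap corollary gives both bounds simultaneously---no separate local $U$-map argument is needed.

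Your upper-bound sketch has its own issue: asserting that the $U$-map carries the lowest-degree local class through $d+1$ consecutive even degrees presupposes that $B_T=Y_T/S^1$ has the rational cohomology of $\mathbb{CP}^d$, i.e.\ that $Y_T$ is a rational homology sphere. This is \emph{not} known in advance for an arbitrary Besse domain. The paper does prove that $Y_T$ is an $\F_2$-homology sphere (Theorem~\ref{thm:MB_families}), but that is \emph{derived from} the barcode structure $\dim SH_*^t=1$, not used as an input. So both halves of your outline ultimately rest on exactly the machinery you have omitted.
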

This result implies that the capacity-spectrum of a Besse star-shaped domain coincides with the usual spectrum (i.e., the set of periods) and the number of repetitions of each period depends on the dimension of the space of orbits of that period. 
We also show that coincidence of the first $n$ capacities implies the Zoll property.
\begin{thmintro}\label{thm:Zoll}
    Suppose that $W\subset \R^{2n}$ is a star-shaped domain  such that 
    $$c_1(W)=\cdots=c_{n}(W).$$ 
    Then, the Reeb flow on $\partial W$ is Zoll and $c_1(W)$ is the minimal period.
\end{thmintro}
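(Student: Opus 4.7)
The plan is to combine the Besse direction of the spectral characterization, which is already in the literature, with the fact that the first capacity equals the smallest Reeb period. The argument splits into two short steps.

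\textbf{Step 1 (Besse).} I would first apply the Ginzburg--G\"urel--Mazzucchelli theorem stated in the introduction with $k=1$. That statement is phrased for the Ekeland--Hofer capacities, but by the equivalence $c_k = c_k^{EH}$ on star-shaped domains established in \cite{gutt2024equivalence}, the hypothesis $c_1(W) = \cdots = c_n(W)$ of our theorem is interchangeable with the corresponding equality of Ekeland--Hofer capacities. The conclusion is that the Reeb flow on $\partial W$ is Besse with some common period $\cT$, together with the index identity $1 = \tfrac{1}{2}(\mu_-(S_\cT)-n+1)$, and that $\cT = c_1(W)$.

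\textbf{Step 2 (Zoll).} I would then invoke the fact that $c_1(W)$ coincides with the minimum of the action spectrum of $\partial W$, i.e., with the smallest minimal period of a closed Reeb orbit on $\partial W$. This is classical for $c_1^{EH}$ (it is the statement that the infimum of the Ekeland--Hofer variational principle is attained by a minimal closed characteristic), and it transfers to $c_1$ via the equivalence of \cite{gutt2024equivalence}. Consequently no Reeb orbit on $\partial W$ has minimal period smaller than $\cT$. On the other hand, the Besse property from Step~1 forces every minimal period to divide $\cT$, hence to be at most $\cT$. Combining the two bounds, every orbit has minimal period exactly $\cT$, which is the Zoll property. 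The identity $c_1(W)=\cT$ then says precisely that $c_1(W)$ is the minimal period.

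\textbf{Where the real work sits.} The proof itself is a short syllogism: the substantive ingredients are (i) the Besse characterization from \cite{ginzburg2021spectral}, (ii) the equivalence of capacities from \cite{gutt2024equivalence}, and (iii) the spectrality of $c_1$ as the minimal Reeb period. The main obstacle, if any, is conceptual rather than technical: one must check that (iii) is valid for the flavor of capacity being used, which is why I would write it out or cite it explicitly rather than taking it for granted. A self-contained alternative that avoids appealing to (iii) is to apply Theorem~\ref{thm:Besse_MB} to a connected component of the space of parametrized orbits of minimal period $T_{\min}$: if $T_{\min}<\cT$, the theorem produces some $c_j$ with $j\geq 1$ equal to $T_{\min}$, contradicting the monotonicity $c_1 \leq c_j$. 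However, the two-step strategy above is more transparent and I expect the paper's proof to follow that route.
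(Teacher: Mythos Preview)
Your Step~1 matches the paper's opening move. The gap is in Step~2: the assertion that $c_1^{EH}(W)$ equals the minimum of the action spectrum is \emph{not} a classical fact for arbitrary star-shaped domains. What is classical is the convex case, where $c_1^{EH}=c_{HZ}$ is realized by a closed characteristic of least action; for non-convex star-shaped $W$ graded spectrality only tells you that $c_1$ is the action of some orbit whose index range contains $n+1$, and the shortest Reeb orbit need not have that index (on a non--dynamically-convex domain it can have index below $n+1$ and be invisible to $c_1$). More structurally, the systole is not monotone under inclusions of star-shaped domains while $c_1^{EH}$ is, so the two cannot agree on the whole class. Your ``short syllogism'' therefore does not close, and the paper does \emph{not} follow this route.

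Your alternative via Theorem~\ref{thm:Besse_MB} is correct and is essentially what the paper does, but note that the innocent-looking clause ``$j\geq 1$'' is exactly the inequality $\mu_-(S_{T_{\min}})\geq n+1$, i.e.\ dynamical convexity of Besse domains (Proposition~\ref{prop:Besse_is_dyn_conv}), which is one of the paper's main new ingredients. So this is not a shortcut around the real work. The paper's own argument is organized slightly differently: after Step~1 it first proves Lemma~\ref{lem:c_1=T} (using Corollary~\ref{cor:RS_geq_2n}, hence Proposition~\ref{prop:Besse_is_dyn_conv}) to pin down $c_1(W)$ as the minimal common period with $\hat\mu(S_1)=2n$, and then rules out any shorter family $S_T$ by showing its index interval would overlap that of $S_1$, contradicting Corollary~\ref{cor:no_index_overlap}. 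Both your alternative and the paper's argument rest on the same two pillars, Proposition~\ref{prop:Besse_is_dyn_conv} and the barcode structure underlying Corollary~\ref{cor:no_index_overlap}; neither is a free lunch.
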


Together with \cite[Theorem 1.5]{ginzburg2021spectral} stated above, Theorems \ref{thm:Besse} and \ref{thm:Zoll} complete the spectral characterization for star-shaped domains.
Finally, our methods also allow us to determine the topological structure of the Morse--Bott families of Reeb orbits on Besse star-shaped domains.
\begin{thmintro}\label{thm:MB_families}
    Let $W\subset \R^{2n}$ be a Besse star-shaped domain and denote by $\varphi^t:\partial W\rightarrow\partial W$ its Reeb flow. For any $T$ in the spectrum of the Reeb flow, the fixed point set 
    \[
    Y_T = \{z:\varphi^Tz =z\} \quad\text{is an $\F_2$-homology sphere, and in particular is connected.} 
    \]
\end{thmintro}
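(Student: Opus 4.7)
The plan is to combine the Morse--Bott decomposition of positive $S^1$-equivariant symplectic homology with the capacity count of Theorem~\ref{thm:Besse_MB} to pin down the $\F_2$-homology of $Y_T$.

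First I would verify that $Y_T=\{z:\varphi^T z=z\}$ is a smooth closed submanifold of $\partial W$. Because the flow is Besse, $\varphi^T$ is tangent to the identity along $Y_T$ and its linearization is non-degenerate in the transverse directions, so Bott's cleanliness lemma applies. Decompose $Y_T=\bigsqcup_i Y_T^i$ into connected components, each a Morse--Bott family of period-$T$ Reeb orbits of some dimension $2d_i+1$; the goal is to show that a single component remains and that it has the $\F_2$-Poincar\'e polynomial $1+t^{2d+1}$.

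Next I would invoke the Morse--Bott presentation of $SH^{S^1,+}_*(W;\F_2)$: after the degree shift dictated by the common Conley--Zehnder index $\mu_-(S_T)$, the subquotient of the filtered chain complex living at action exactly $T$ is quasi-isomorphic to $\bigoplus_i H^{S^1}_*(Y_T^i;\F_2)$, where $S^1$ acts via the Reeb flow. Under the identification of equivariant symplectic homology capacities with Ekeland--Hofer capacities from \cite{gutt2024equivalence}, the conclusion that precisely $c_k,\ldots,c_{k+d}$ equal $T$ translates into a count of free $\F_2[u]$-module generators of $\bigoplus_i H^{S^1}_*(Y_T^i;\F_2)$ detected by iterated multiplication by the equivariant parameter $u$. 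A Borel-fibration computation with $E^2$-page $H_*(Y_T;\F_2)\otimes H^*(BS^1;\F_2)$ then forces the differentials to collapse exactly as they would for a free $S^1$-action on an $\F_2$-homology $(2d+1)$-sphere, so that $H_*(Y_T;\F_2)\cong H_*(S^{2d+1};\F_2)$; in particular $Y_T$ is connected.

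The main obstacle is handling the fact that the $S^1$-action on $Y_T$ is not free: points on orbits of minimal period $T/m$ carry $\Z/m$ isotropy, and over $\F_2$ even isotropy typically contributes additional classes to the Borel construction. One must argue that these extra classes either propagate via $u$-multiplication from contributions at strictly smaller action levels $T/m$ or cancel against each other, so that they do not inflate the count of capacities at level exactly $T$. Managing this stratified contribution, together with isolating the level-$T$ subquotient from neighbouring filtration windows in the Morse--Bott complex, is the step I expect to be the most delicate.
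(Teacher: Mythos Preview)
Your proposal has a circularity problem. You invoke Theorem~\ref{thm:Besse_MB} (that exactly $c_k,\dots,c_{k+d}$ equal $T$) as input, but in the paper's logical order Theorem~\ref{thm:Besse_MB} is proved \emph{after} Theorem~\ref{thm:MB_families} and depends on it: Theorem~\ref{thm:Besse_MB} uses Corollary~\ref{cor:no_index_overlap}, which uses Corollary~\ref{cor:diagonal_barcode}, and the proof of Corollary~\ref{cor:diagonal_barcode} explicitly appeals to the fact that $H_*(Y_T;\F_2)$ has exactly two classes in degrees $0$ and $2d+1$ --- precisely the content of Theorem~\ref{thm:MB_families}. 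So as written, the argument assumes what it wants to prove. Note also that Theorem~\ref{thm:Besse_MB} as stated only gives $c_k=\cdots=c_{k+d}=T$; to run your Borel--fibration count you would need the sharper statement that \emph{no other} $c_j$ equals $T$, and that again comes from Corollary~\ref{cor:no_index_overlap}.

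The paper's route is quite different and avoids the equivariant theory entirely. It works with ordinary filtered $SH$ and its barcode: the Morse--Bott lemma (Lemma~\ref{lem:local_SH_non-zero}) identifies $SH^{[T-\epsilon,T+\epsilon]}_{*+\mu_-(S_T)}(W;\F_2)$ with $H_*(Y_T;\F_2)$, while Proposition~\ref{prop:SH_1_dim} (proved via dynamical convexity, the index--recurrence machinery of \cite{cineli2024closed}, and the Smith inequality) forces $\dim SH^t_*(W;\F_2)=1$ for every $t>0$. Hence exactly one bar enters and one bar leaves the window $[T-\epsilon,T+\epsilon]$, so $\dim H_*(Y_T;\F_2)\le 2$; since $Y_T$ is a closed manifold this forces $H_*(Y_T;\F_2)\cong H_*(S^{2d+1};\F_2)$. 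In particular the isotropy issue you flag never arises, because the argument stays on the non-equivariant side. Even setting the circularity aside, your sketch leaves the $\F_2$--isotropy contribution unresolved, and without a concrete mechanism to cancel or redistribute those classes the equivariant count cannot be closed.
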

Theorem~\ref{thm:MB_families} is stated with $\F_2$ coefficients for technical reasons. More explicitly, we expect that accounting for orientations in Appendix~\ref{app:MB} would extend Theorem~\ref{thm:MB_families} to $\Z$ coefficients. 

The main new ingredient in the proofs is a lemma showing that Besse star-shaped domains must be dynamically convex. Together with an adaptation of the methods in \cite{cineli2024closed} to the case of Besse Reeb flows, this allows us to understand the symplectic homology barcode of Besse contact forms, and deduce the coincidence of the equivariant SH capacities.

\section*{Acknowledgements}
We are very grateful to Erman \c{C}ineli, Viktor Ginzburg, and Ba\c{s}ak G\"urel  for several useful comments regarding this work. We also thank Yaniv Ganor and Yuan Yao for useful discussions concerning Appendix~\ref{app:MB}. Finally, we are grateful for the hospitality of the IAS where  most discussions took place.

JG was partially supported by the Institute for Advanced Study.

VGBR was partially supported by CNPq productivity grant number 308678/2025-7, FAPERJ Jovem Cientista do Nosso Estado grant number E-26/204.621/2024 and a grant from the Serrapilheira Institute.

ST was partially supported by the Center for New Scientists at the Weizmann Institute of Science, Alon Fellowship and ISF Beresheet grant 4097/25.

\section{Preliminaries}
Since the proofs heavily rely on the results of \cite{cineli2024closed}, we will stick to their notation as much as possible. In particular, this section briefly summarizes the relevant background from sections 2 and 3 in \cite{cineli2024closed}, and we refer there for more details.

\subsection{The Conley--Zehnder index in the Morse--Bott setting} \label{sec:CZ}
\subsubsection{Morse--Bott star-shaped domains}
Let $W$ be a star-shaped domain whose Reeb flow $\varphi^t$ is Besse, namely $\varphi^\cT=\operatorname{id}$ for some $\cT>0$. 
Any Besse contact form is Morse--Bott nondegenerate, and its flow induces an almost-free $S^1$ action on $\partial W$. We denote by $\operatorname{Spec}(\partial W)\subset \R$ the set of periods of periodic orbits. This set is rank-1 (i.e., contained in a rescaling of $\Z$) when the flow is Besse. 

After rescaling  $W$ we may assume that $\varphi^1=\operatorname{id}$. We will work under this normalization assumption from now on.
For any period $T\in \operatorname{Spec}(\partial W)$ of an orbit of $\varphi^t$, the set 
$$Y_T=\{z: \varphi^Tz=z\} \quad \text{is a smooth contact manifold}$$
and $d\varphi^t$ is nondegenerate in the directions transverse to $Y_T$. \begin{rem}[Finiteness]\label{rem:finiteness_Besse}
    For each $T$, $Y_T$ has finitely many connected components, as it is a smooth manifold. Moreover, the Morse--Bott nondegeneracy guarantees that period spectrum is discrete, and therefore there are finitely many periods up to the common period of the flow ($T\leq 1$). For any period $T>1$, the manifold $Y_T$ contains no simple orbits.
    Overall this implies that there are finitely many connected components of manifold of (parametrized) orbits that contain simple orbits. 
\end{rem} 
The quotient of $Y_T$ under the almost-free circle action induced by $\varphi^t$, denoted by 
\[
S_T:= \left\{p\in \partial W: \varphi^T(p)=p\right\}/\{\varphi^t\}_{t\in [0,T]} \quad\text{is a symplectic orbifold.}
\]
We sometimes refer to it as the $T$-critical manifold, but will mostly work with the manifold $Y_T$, rather than the orbifold quotient.
We will often denote its dimension by $2d(S_T)$, or simply $2d$ when the critical manifold is clear from the context. 

\subsubsection{Indices of paths of symplectic matrices}
Let $\widetilde{\Sp}(2n-2)$ be the universal cover of ${\Sp}(2n-2)$. We think of elements of $\widetilde{\Sp}(2n-2)$ as paths in ${\Sp}(2n-2)$ starting at the identity, up to homotopy with fixed ends.

The Robbin--Salamon index of a possibly degenerate path  $\Phi\in \widetilde{\Sp}(2n-2)$ of symplectic matrices is a half-integer
\begin{equation}\label{eq:index_jump_bound}
\indRS(\Phi)\in \frac{1}{2}\Z \quad\text{such that} \quad |\indRS(\Phi)-\indRS(\Phi')|\leq \frac{1}{2} \dim \ker \Phi(1) 
\end{equation}
for any path $\Phi'$ close enough to $\Phi$. For a nondegenerate path it coincides with the Conley--Zehnder index, $\indCZ(\Phi)$.
We define the extremal indices $\mu_\pm(\Phi)$ to be smallest and largest indices of nondegenerate perturbations
\[
\mu_-(\Phi):=\liminf_{\Phi'\rightarrow\Phi}\indRS(\Phi'), \qquad 
\mu_+(\Phi):=\limsup_{\Phi'\rightarrow\Phi}\indRS(\Phi'),
\]
where the limits are taken over nondegenerate perturbations $\Phi'$ of $\Phi$. Then clearly,
\begin{equation}
    \mu_-(\Phi)\leq \indRS(\Phi)\leq \mu_+(\Phi) \quad\text{and}\quad |\mu_{\pm}(\Phi) -\indRS(\Phi)|\leq \frac{1}{2} \dim \ker \Phi(1).
\end{equation}
Actually, $\mu_\pm(\Phi) = \indRS(\Phi) \pm \frac{1}{2} \dim \ker \Phi(1)$. This follows from the index jump bound (\ref{eq:index_jump_bound}), as well as the fact that, for small enough $\epsilon>0$, $\indRS(\{e^{\pm\epsilon J_0 t} \Phi(t)\}_t) = \indRS(\Phi)\pm \frac{1}{2} \dim \ker \Phi(1)$. 
We say that the path $\Phi\in \widetilde{\Sp}(2n-2)$ is dynamically convex if
\[
\mu_-(\Phi)\geq n+1.
\]
The mean index of a path is 
\[
\hat \mu(\Phi):= \lim_{k\rightarrow \infty}\frac{\indRS(\Phi^k)}{k} = \lim_{k\rightarrow \infty}\frac{\mu_\pm(\Phi^k)}{k}, 
\]
where $\Phi^k$ is the composition of $\Phi$ on itself $k$ times. The mean index has two properties that will be useful to us:
\begin{enumerate}
    \item \cite[eq. (3.2)]{cineli2024closed}: For any $\Phi$,
    \begin{equation}\label{eq:mean_index_bound}
        \hat\mu(\Phi)-n+1\leq \mu_-(\Phi)\leq \mu_+(\Phi)\leq \hat\mu(\Phi)
    +n-1.
    \end{equation}
    \item When $\Phi$ is maximally degenerate, all eigenvalues of  $\Phi$ are equal to 1, then
\begin{equation}
    \indRS(\Phi^k) = k\cdot \indRS(\Phi)\quad \text{and hence}\quad \indRS(\Phi) = \hat\mu (\Phi).
\end{equation}
\end{enumerate}
\subsubsection{Indices of closed orbits and Morse--Bott families.}
\begin{lemma}\label{lem:linearized_flows}
    Let $W$ be a star-shaped domain such that the Reeb flow on $\partial W$ is Besse. For any Reeb orbit $x$ on $\partial W$, there is an element 
    $$\Phi\in\widetilde{\Sp}(2n-2)$$
    well-defined up to conjugation by a constant matrix, that  represents the linearized flow along $x$, and:
    \begin{enumerate}
        \item The element corresponding to the $k$-th iterate of $x$ is $\Phi^k$.
        \item \label{itm:Lin_flow_MB_family} If
      $x_0,x_1$ are Reeb orbits lying in the same connected component of a Morse--Bott family, then their linearized flows give the same element:
    \[
    \Phi_0=\Phi_1 \quad\text{in}\quad  \widetilde{\Sp}(2n-2).
    \]
    \end{enumerate}
\end{lemma}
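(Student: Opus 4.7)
The plan is to define $\Phi$ from a symplectic trivialization of $\xi|_x$, then verify both listed properties. For the setup: choose a symplectic trivialization $\tau$ of $\xi$ along $x$ (which exists because $\xi|_x$ is a symplectic bundle over $S^1$, hence trivial), and take $\tau$ in the preferred homotopy class determined by extending over a capping disk of $x$ in $\R^{2n}$---this is well defined because $W$ is contractible, so all capping disks are homotopic and $T\R^{2n}$ is canonically trivial. Reading the linearized Reeb flow $d\varphi^t|_\xi$ in $\tau$ produces a path $\Phi_\tau : [0,T] \to \Sp(2n-2)$ with $\Phi_\tau(0) = \operatorname{id}$, hence an element of $\widetilde{\Sp}(2n-2)$; the only remaining freedom is the symplectic frame chosen at the base point $x(0)$, which acts on $\Phi_\tau$ by constant conjugation.

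For property (1), extend $\tau$ periodically over $[0, kT]$. Using the identity $d\varphi^{jT+t} = d\varphi^t \circ (d\varphi^T)^j$ and the fact that the identification at $t = jT$ matches the one at $t = 0$, the path associated to the $k$-th iterate is the $k$-fold concatenation of $\Phi$, which equals $\Phi^k$ in $\widetilde{\Sp}(2n-2)$.

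For property (2), connect $x_0$ and $x_1$ by a smooth path $\{x_s\}_{s\in[0,1]}$ in the same connected component of the Morse--Bott family and consider the evaluation map $F : S^1 \times [0,1] \to \partial W$, $(t,s) \mapsto x_s(t)$. The pullback $F^*\xi$ is a symplectic bundle over a cylinder and is therefore trivial, so $\tau$ extends to a continuous family of trivializations along the $x_s$, yielding a continuous family $\Phi_s \in \widetilde{\Sp}(2n-2)$. The Besse normalization $\varphi^1 = \operatorname{id}$ then forces $\Phi_s(T)^m = \operatorname{id}$ with $m = 1/T \in \N$, so each $\Phi_s(T)$ is of finite order and has spectrum in the $m$-th roots of unity. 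Since this spectrum is discrete, continuity in $s$ forces it to be constant, so the endpoints $\Phi_s(T)$ lie in a single conjugacy class of $\Sp(2n-2)$. Choosing a continuous path $A_s \in \Sp(2n-2)$ with $A_s^{-1}\Phi_s(T) A_s = \Phi_0(T)$ and modifying the trivialization by $A_s$ makes the endpoints literally constant in $s$, so the two-parameter family becomes a homotopy rel endpoints in $\Sp(2n-2)$, proving $\Phi_0 = \Phi_1$ in $\widetilde{\Sp}(2n-2)$ up to constant conjugation.

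The main obstacle is this last step: promoting the free homotopy $s \mapsto \Phi_s$ to one with fixed endpoints. The Besse hypothesis is essential through the finite order of $\Phi_s(T)$; without it the endpoint could drift within its conjugacy class in a topologically non-trivial way, and one would need a more delicate analysis of the symplectic normal bundle to the Morse--Bott family.
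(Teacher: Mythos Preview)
Your approach matches the paper's: define $\Phi$ via a disk-extending trivialization, then for item~(2) extend the trivialization over a cylinder connecting $x_0$ to $x_1$ and use the Besse hypothesis to force the endpoint $\Phi_s(T)$ to have finite order, concluding by a discreteness-plus-continuity argument. The paper does the same, phrasing the last step as an explicit diagonalization $\Phi_s(T)\cong\operatorname{id}_{d}\oplus\bigoplus_j e^{i\theta_j^s}$ with the $\theta_j^s$ lying in a finite set and hence constant.

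Two points need repair. First, $1/T$ need not be an integer (e.g.\ $T=2/3$ arises when some orbit has minimal period $1/3$); you should instead take any $m\in\N$ with $mT\in\N$, which still gives $\Phi_s(T)^m=\operatorname{id}$. Second, and more seriously, the inference ``spectrum constant $\Rightarrow$ single $\Sp(2n-2)$-conjugacy class'' is false: already in $\Sp(2)=SL(2,\R)$ the rotations by $\theta$ and $-\theta$ have identical spectrum but are not conjugate. The conclusion you want is nonetheless true, for a slightly different reason: the set $\{A\in\Sp(2n-2):A^m=\operatorname{id}\}$ decomposes into finitely many conjugacy classes, each of which is closed (orbits of semisimple elements are closed) and connected (being a quotient of $\Sp$), hence clopen; therefore the continuous path $s\mapsto\Phi_s(T)$ cannot leave the conjugacy class of $\Phi_0(T)$. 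With this correction your lifting of $A_s$ and the rest of the argument go through unchanged.
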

\begin{proof}
   Suppose that $W$ is a star-shaped domain, then all Reeb orbits are contractible and $c_1(\xi)=0$ for $\xi\subset T(\partial W)$ the contact distribution. 
   Given a Reeb orbit $x\subset \partial W$, let $D$ be a disk bounding it, and let 
\[
\tau_D:\xi|_D\rightarrow \R^{2n-2} \quad \text{be a symplectic trivialization of the contact structure.}
\]
We define 
$$\Phi(t):= \tau_D\circ d\varphi^t(x(0))\circ \tau_D^{-1}$$
to be the element of $\widetilde{\Sp}(2n-2)$ corresponding to $x$.
Up to a global change of basis of the target $\R^{2n-2}$ (or, in other words, conjugation by a fixed matrix), this element is independent of the choice of $D$ and $\tau_D$, since any two such disks are homotopic and the corresponding trivializations will induce homotopic paths of matrices with fixed ends. 

   Denote by $S$ the connected component of the manifold $Y_T$ of fixed points of $\varphi^T$, that contains $x_0,x_1$. Consider the sub-bundle 
   $$\xi_S:=\xi|_S\cap TS.$$ 
   The dimension of this sub-bundle is $2d=\dim(S_T)$. The time-$T$ linearized flow satisfies:
   \begin{equation}
       d\varphi^T|_{\xi_S}=\operatorname{id}_{\xi_S}\quad \text{and} \quad (d\varphi^T)^k=\operatorname{id}_\xi \text{ for any $k$ such that }kT\in\N.
   \end{equation} 
   This implies that $d\varphi^T_{x_0(0)}$ is diagonalizable over $\C$:
   \begin{equation}\label{eq:linearized_sum_of_rotations}
   d\varphi^T_{x_0(0)}
   \cong \operatorname{id}_{d,\C} \oplus_{j=1}^{n-1-d} \ e^{i\theta_j}\cdot \operatorname{id}_{1,\C},\quad \text{for }\theta_j\in \left\{\frac{2\pi m}{k}: m=1,\dots, k-1\right\}.
   \end{equation}
   Fix a trivialization $\tau(t):\xi|_{x_0(t)}\rightarrow \R^{2n-2}$ that extends over a disk $D$ bounding $x_0$. 
   After a constant change of coordinates on the target $\R^{2n-2}$, we may assume that
   \[
   \tau(0)\circ d\varphi^T_{x_0(0)}\circ (\tau(0))^{-1}=
   \operatorname{id}_{2d} \oplus_{j=1}^{n-1-d} \ e^{i\theta_j}.
   \]
   Moreover, it follows from the Morse--Bott condition that $\theta_j\neq 0 \mod 2\pi$.
   
    Now, given $x_1$ in the same component $S$, choose a path $x_s(0)$ in $S$ connecting $x_0(0)$ to $x_1(0)$ and set 
    \[(s,t)\mapsto x_s(t):=\varphi^tx_s(0).
    \]
    Extend $\tau(t)$ to a trivialization $\tilde \tau(s,t):\xi|_{x_s(t)}\rightarrow \R^{2n-2}$ such that
    \[
     \tilde\tau(s,0)\circ d\varphi^T\circ (\tilde\tau(s,0))^{-1}= \operatorname{id}_{d,\C} \oplus_{j=1}^{n-1-d} \ e^{i\theta_j^s}\cdot \operatorname{id}_{1,\C}.
    \]
    But, since $\theta_j^s$ is continuous in $s$ and takes values in the finite set $\left\{\frac{2\pi m}{k}: m=1,\dots, k-1\right\}$, then it must be constant in $s$: $\theta_j^s=\theta_j$. 
    We see that under the trivialization $\tilde \tau$ the linearized flows along $x_0,x_1$ are homotopic with fixed ends, so they represent the same element in $\widetilde{\Sp}(2n-2)$.  
    \end{proof}
    
    \begin{cor}
        Let $W$ be a Besse star-shaped domain. For any two Reeb orbits  $x_0,x_1$  lying in the same connected component of a Morse--Bott family $S_T$ of Reeb orbits on $\partial W$, 
    \[
    \indRS(x_0) = \indRS(x_1).
    \]
    \end{cor}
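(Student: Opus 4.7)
The plan is to deduce the corollary directly from Lemma~\ref{lem:linearized_flows}, which has already done essentially all the work. The definition of $\indRS(x)$ for a Reeb orbit $x$ is $\indRS(\Phi)$, where $\Phi \in \widetilde{\Sp}(2n-2)$ is the element representing the linearized flow along $x$ as constructed in the lemma. Since the Robbin--Salamon index is a homotopy invariant of paths with fixed endpoints in $\Sp(2n-2)$, it descends to a well-defined function on $\widetilde{\Sp}(2n-2)$.

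The one subtlety is that $\Phi$ is only defined up to conjugation by a constant symplectic matrix, since it depends on a choice of trivialization over a bounding disk. So the first step I would record is that the Robbin--Salamon index is conjugation-invariant: if $A \in \Sp(2n-2)$ is constant and $\Phi' = A \Phi A^{-1}$, then $\indRS(\Phi') = \indRS(\Phi)$. This follows because $A$ can be connected to the identity in $\Sp(2n-2)$ (the symplectic group is connected), giving a homotopy from $\Phi$ to $\Phi'$ through paths starting at the identity, and because the RS index is invariant under symplectic conjugation of crossings.

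With this, the proof becomes a one-liner: by part~\ref{itm:Lin_flow_MB_family} of Lemma~\ref{lem:linearized_flows}, the linearized flows $\Phi_0, \Phi_1$ associated to $x_0, x_1$ represent the same element of $\widetilde{\Sp}(2n-2)$ (up to the ambiguity handled above), so
\[
\indRS(x_0) = \indRS(\Phi_0) = \indRS(\Phi_1) = \indRS(x_1).
\]
There is no real obstacle here; the only thing to be careful about is invoking the conjugation-invariance of $\indRS$ cleanly, since the identification in Lemma~\ref{lem:linearized_flows} is only canonical up to a constant change of basis of the target $\R^{2n-2}$.
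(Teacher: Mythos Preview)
Your proposal is correct and follows exactly the approach the paper intends: the corollary is stated without proof immediately after Lemma~\ref{lem:linearized_flows}, as a direct consequence of part~\ref{itm:Lin_flow_MB_family}. You have simply made explicit the conjugation-invariance of $\indRS$ that the paper leaves tacit.
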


\begin{defin}\label{def:linearized_of_conn_comp}
     In light of Lemma~\ref{lem:linearized_flows}, we define the linearized flow of a connected component $S$ of a Morse--Bott family to be the linearized flow of an orbit from $S$. We will also denote by 
     $$\indRS(S),\quad  \hat\mu(S)\quad \text{and}\quad \mu_\pm(S)$$ 
     the Robbin--Salamon index, mean index and extremal indices  of (any) orbit in $S$.
     Finally, we define $\mu_-(S)$ (respectively, $\mu_+(S)$) to be the minimal (respectively, maximal) indices over all connected components of $S_T$. It holds that (see Appendix~\ref{app:MB}):
     \[
     \mu_\pm(S) = \indRS(S) \pm d,\quad\text{where }d=\frac{1}{2}\dim(S)
     \]
\end{defin}

\subsection{Symplectic homology}\label{subsec:SH}
Given a nondegenerate star-shaped domain $W$, the symplectic homology $SH(W;\mathbb{F})$ is a direct limit of  Hamiltonian Floer homology of
certain admissible Hamiltonians (these are $C^2$–small on $W$ and grow linearly in the Liouville coordinate outside of $W$). The chain complexes for such Hamiltonians have two generators 
$$
\hat x, \check x \quad \text{in degrees}\quad |\hat x| = \indCZ(x)+1, \quad |\check x| = \indCZ(x)
$$ 
for any periodic Reeb orbit $x$ on $\partial W$, and a single critical point $p$ in degree $n$, representing the homology of $W$. The action filtration is given by the action (or Reeb period) for the generators  $\hat x, \check x$ and zero for the critical point $p$.
For any  $t\in \R$, the filtered homology $SH^t(W;\mathbb{F})$ is the homology of the sub-complex of generators with action $<t$. 
Given an interval  $I=(t',t)\subset \R$, $SH^I(W;\F)$ is the homology of the quotient of chain complex $SC^t(W;\F)$ by the image of $SC^{t'}(W;\F)$. 

This filtration induces the structure of a persistence module on symplectic homology. By the structure theorem for persistence modules, $SH(W;\mathbb{F})$ decomposes uniquely as a direct sum of graded interval modules $\mathbb{F}(a,b]$, and the resulting multiset of intervals is the {barcode} 
$$\mathcal{B}(W;\mathbb{F}).$$ 

The {degree} $\deg(I)$ of a bar $I=(a,b]$ is the homological degree in which the corresponding interval module lives.

For every bar $I=(a,b]$, there exist  closed Reeb orbits $x,y$ such that 
\[
\mathcal{A}(x)=a,\qquad \mathcal{A}(y)=b,\qquad 
\deg(I)\in [\mu_-(x),\mu_+(x)+1],\quad \deg(I)\in [\mu_-(y)-1,\mu_+(y)].
\]
In particular this means that $\mu^{-}(y)-\mu^{+}(x)\le 2$ and when equality holds one has $\deg(I)=\mu^{+}(x)+1=\mu^{-}(y)-1$. Bars with $a=0$ start at the extra generator $p$.

\begin{rem}[Smith inequality] \label{rem:smith}
    Let $W$ be a Liouville domain. The Smith inequality (\cite{seidel2015equivariant,shelukhin2022mathbb}, see also \cite[Theorem 2.5]{cineli2024closed}) states that 
    \begin{equation}\label{eq:smith}
        \dim SH^{pI}(W;\mathbb{F}_p) \geq \dim SH^I(W;\mathbb{F}_p).
    \end{equation}
    Together with the universal coefficients theorem this implies that for all $t>0$ and for any field $\mathbb{F}$,
    \[
    \dim SH^t(W;\mathbb{F}) \geq \dim H(W;\mathbb{F}).
    \]
    See {\cite[eq. (4.12)]{cineli2024closed}} for a proof.
\end{rem}
\subsubsection{The equivariant SH capacities}
Below is a review of the equivariant SH capacities, for more details see \cite{gutt2018symplectic}. In this subsection we restrict our attention to fields $\F$ of characteristic zero\footnote{We are grateful to Viktor Ginzburg and Ba\c{s}ak G\"urel for pointing out the importance of characteristic zero for the graded spectrality property.} and omit it from the notation.
\begin{defin}[equivariant SH capacities]
Let $W\subset \R^{2n}$ be star-shaped domain. As a graded vector space, its positive $S^1$–equivariant symplectic homology is 
$$CH_*(W) = \begin{cases}
    \mathbb{F}, & *=n-1+2k,\ k\in\N\\
    0, & \text{otherwise}.
\end{cases}$$ 
It admits an action filtration. The equivariant SH capacities are defined to be
\[
c_k(W) := \inf\left\{L: \operatorname{im}\Big(CH_{n-1+2k}^L(W))\rightarrow CH_{n-1+2k}(W)\Big) \neq 0 \right\}\in(0,\infty]
\]
\end{defin}
The equivariant SH capacities admit several useful properties, but we will only use the (graded) spectrality property.\\

\noindent\textbf{Graded spectrality:}
  If $\alpha=\lambda|_{\partial W}$ is nondegenerate, then
  \[
    c_k(W)=\cA(\gamma)\quad \text{for some Reeb orbit $\gamma$ with}\quad 
    \indCZ(\gamma)=n-1+2k.
  \]
  When $W$ is Morse--Bott, there exists a connected component $S$ of a Morse--Bott family $S_T$ such that
  \[
    c_k(W)=T\quad \text{and}\quad 
    n-1+2k\in [\indRS(S) -d,\indRS(S) +d].
  \]

\section{Besse  domains are dynamically convex}
\begin{lemma}\label{lem:positive_mean_index}
    Suppose that the Reeb flow on $\partial W$ is Besse of minimal common period $1$. Then $\indRS(S_1)>0$.
\end{lemma}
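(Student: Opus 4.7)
The plan is to exploit Lemma~\ref{lem:linearized_flows}(2) to reduce the question to bounding the mean index of a single capacity-realizing orbit, and then to apply the graded spectrality of $c_1(W)$.

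First, since $\varphi^1=\operatorname{id}$, the time-$1$ linearized flow $\Phi$ along any orbit $\gamma\in S_1$ satisfies $\Phi(1)=\operatorname{id}$ on the contact distribution $\xi$. Thus $\Phi$ is maximally degenerate, and the second property in Section~\ref{sec:CZ} gives $\indRS(\Phi)=\hat\mu(\Phi)$. Moreover, $Y_1=\partial W$ is connected (being the boundary of a contractible domain), so Lemma~\ref{lem:linearized_flows}(2) implies that the linearized flows along any two orbits in $S_1$ represent the same element of $\widetilde{\Sp}(2n-2)$. Consequently $\indRS(S_1)=\hat\mu(\gamma)$ is a single well-defined number, independent of the choice of $\gamma\in S_1$, and it suffices to exhibit one orbit in $S_1$ with positive mean index.

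Next, I would apply graded spectrality to the first capacity $c_1(W)$. Writing $T_1=c_1(W)$, this yields a connected component $S'\subset S_{T_1}$ of dimension $2d'+1$ such that $n+1\in[\indRS(S')-d',\indRS(S')+d']$. In particular, $\mu_+(S')=\indRS(S')+d'\geq n+1$, and the mean index bound (\ref{eq:mean_index_bound}) then gives $\hat\mu(\gamma_1)\geq \mu_+(S')-(n-1)\geq 2$ for any $\gamma_1\in S'$. Because the Reeb flow is Besse with common period $1$, the minimal period $T_1$ has the form $1/m_1$ for some positive integer $m_1$, so the $m_1$-th iterate $\gamma_1^{m_1}$ lies in $S_1$. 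By linearity of the mean index under iteration, $\hat\mu(\gamma_1^{m_1})=m_1\,\hat\mu(\gamma_1)\geq 2$.

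Combining the two steps, $\indRS(S_1)=\hat\mu(\gamma_1^{m_1})\geq 2>0$, as claimed. The step that makes the argument work so cleanly is the connectedness of $Y_1$: together with Lemma~\ref{lem:linearized_flows}(2), it lets me upgrade positivity of $\hat\mu$ at a single capacity-realizing orbit to positivity of $\indRS(S_1)$ as a whole. The main subtlety I would need to check is that the component $S'$ produced by graded spectrality genuinely has the dimension and index bounds claimed (so that the Morse--Bott version of spectrality applies cleanly in the Besse setting), but this should follow from the setup of Section~\ref{sec:CZ}.
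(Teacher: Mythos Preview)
Your argument is essentially the direct version of the paper's contrapositive proof: both use graded spectrality of the capacities $c_k$, the mean index bound (\ref{eq:mean_index_bound}), and the proportionality $\hat\mu(x)=T_x\cdot\hat\mu(S_1)$. The paper assumes $\indRS(S_1)\le 0$, deduces $\mu_+(x)\le n-1$ for every orbit, and contradicts spectrality; you start from the orbit carrying $c_1(W)$ and propagate positivity of $\hat\mu$ back to $S_1$.

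One small inaccuracy: you assert that $T_1=c_1(W)$ is of the form $1/m_1$, but graded spectrality does not guarantee that the carrier orbit $\gamma_1$ is simple, so $T_1$ is only known to be a positive rational $j/k$ (where the underlying simple orbit has minimal period $1/k$ and $\gamma_1$ is its $j$-th iterate). The fix is immediate: the $k$-th iterate of the simple orbit lies in $S_1$, and linearity of the mean index gives $\hat\mu(S_1)=\hat\mu(\gamma_1)/T_1\ge 2/T_1>0$, so $\indRS(S_1)=\hat\mu(S_1)>0$ as desired.
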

\begin{proof}
    For any Reeb orbit $x$ on $\partial W$, its mean index is a positive multiple of $\hat \mu(S_1) = \indRS(S_1)$. Therefore, if $\indRS(S_1)\leq 0$ then all orbits have non-positive mean indices. By (\ref{eq:mean_index_bound}), this implies
    \[
    \mu_+(x)\leq \hat\mu(x)+n-1\leq n-1.
    \]
    By the definition of $\mu_+$, there are nondegenerate perturbations of $W$ for which all Conley--Zehnder indices of all orbits are bounded by $n-1$. This contradicts the fact that  $c_k(W)$ are finite and satisfy the graded spectrality property. 
\end{proof}
For Besse contact forms, the Robbin--Salamon index of any orbit is an integer. The following  fact from \cite{chaidez2022contact} determines the parity of the index and will be useful.
\begin{lemma}[{\cite[eq. (4.2), (4.3)]{chaidez2022contact}}]\label{lem:pairity_from_CDPT}
    Let $W$ be a Besse star-shaped domain and $S$ a connected component of a Morse--Bott family of dimension $2d$. Then 
    $$\indRS(S) \equiv n-1-d \mod 2\qquad \text{and}\qquad \mu_-(S) \equiv n-1 \mod 2.$$
\end{lemma}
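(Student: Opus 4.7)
The plan is to reduce the parity of $\indRS(S)$ and $\mu_-(S)$ to the parity of the Conley--Zehnder index of a nondegenerate perturbation, and then pin the latter down via the classical sign formula $(-1)^{\indCZ(\Phi)-(n-1)}=\operatorname{sign}\det\bigl(\operatorname{Id}-\Phi(1)\bigr)$ for nondegenerate paths in $\Sp(2n-2)$.

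First I would recall from Lemma~\ref{lem:linearized_flows} (and its proof) that, for any orbit $x$ in the component $S\subset S_T$ of dimension $2d$, the time-$T$ linearized return map admits a $\C$-linear splitting
\[
d\varphi^T_{x(0)}\;\cong\;\operatorname{id}_{d,\C}\;\oplus\;\bigoplus_{j=1}^{n-1-d}e^{i\theta_j}\cdot\operatorname{id}_{1,\C},
\qquad\theta_j\notin 2\pi\Z,
\]
so the kernel of $\operatorname{Id}-d\varphi^T$ has real dimension exactly $2d$ and the transverse spectrum consists of complex rotations.

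Next I would produce a one-parameter family of nondegenerate perturbations of the path $\Phi=\Phi_S\in\widetilde\Sp(2n-2)$ by multiplying on the left (near $t=1$) by $e^{\mp\epsilon J_0 t}$ restricted to the $2d$-dimensional identity block, as already done in the discussion following (\ref{eq:index_jump_bound}). The endpoint $M_\epsilon$ of the perturbed path then has the form $e^{\mp i\epsilon}\operatorname{id}_{d,\C}\oplus\bigoplus_j e^{i\theta_j}$, and consequently
\[
\det(\operatorname{Id}-M_\epsilon)\;=\;|1-e^{\mp i\epsilon}|^{2d}\cdot\prod_{j=1}^{n-1-d}|1-e^{i\theta_j}|^{2}\;>\;0,
\]
for every small $\epsilon\neq 0$ and both signs. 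The sign formula then forces
\[
\indCZ(\Phi_\epsilon^{\pm})\;\equiv\;n-1\pmod 2
\]
for every nondegenerate perturbation in this family. Taking liminf and limsup over such perturbations, I obtain $\mu_-(S)\equiv\mu_+(S)\equiv n-1\pmod 2$, which is the second assertion of the lemma.

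Finally, the identity $\indRS(S)=\mu_-(S)+d$ (recorded in Definition~\ref{def:linearized_of_conn_comp}) immediately gives $\indRS(S)\equiv n-1+d\equiv n-1-d\pmod 2$, which is the first assertion. The only nontrivial input is the parity sign formula for the Conley--Zehnder index of nondegenerate paths; everything else is combinatorial once the eigenvalue structure of the Besse return map is in hand, so I do not anticipate a serious obstacle---the statement is essentially a direct consequence of the Morse--Bott normal form together with a standard identity. The one point that deserves care is ensuring that the perturbation respects the splitting of the linearized flow globally in $t$, rather than only at the endpoint, but this is precisely what the explicit multiplicative perturbation $e^{\mp\epsilon J_0 t}$ achieves.
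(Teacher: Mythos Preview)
Your argument is correct and, in fact, more self-contained than the paper's own proof. The paper does not prove the parity statement from scratch; it simply imports \cite[eq.~(4.2), (4.3)]{chaidez2022contact}, which already assert $\indRS(S)\equiv n-1-d\pmod 2$ and that after a Morse--Bott perturbation the nondegenerate indices are $\equiv n-1+\ind_{\mathrm{Morse}}\pmod 2$, and then observes that the minimum occurs at Morse index $0$. Your route is different: you use the eigenvalue normal form (\ref{eq:linearized_sum_of_rotations}) to see that any small complex-rotation perturbation of $\Phi(1)$ has $\det(\operatorname{Id}-\Phi'(1))>0$, and then invoke the classical parity identity $(-1)^{\indCZ(\Phi')-(n-1)}=\operatorname{sign}\det(\operatorname{Id}-\Phi'(1))$. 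This avoids the external citation entirely and works directly inside the framework of \S\ref{sec:CZ}.

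Two small points of presentation. First, the phrase ``taking liminf and limsup over such perturbations'' is imprecise, since $\mu_\pm$ are defined over \emph{all} nondegenerate perturbations; what you are really using is that the specific family $e^{\pm\epsilon J_0 t}\Phi(t)$ already realizes $\mu_\pm(\Phi)$, as recorded just after (\ref{eq:index_jump_bound}). Second, there is no need to restrict the perturbation to the $2d$-dimensional identity block: the full perturbation $e^{\pm\epsilon J_0 t}\Phi(t)$ used in the paper has endpoint $e^{\pm i\epsilon}\operatorname{id}_{d,\C}\oplus\bigoplus_j e^{i(\theta_j\pm\epsilon)}$, whose determinant $\prod|1-e^{i\alpha}|^2$ is equally positive, and this is exactly the perturbation the paper already identifies as achieving $\mu_\pm$. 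Either way, once you know one nondegenerate perturbation realizing $\mu_-$ has $\indCZ\equiv n-1\pmod 2$, the relation $\indRS(S)=\mu_-(S)+d$ from Definition~\ref{def:linearized_of_conn_comp} finishes the job as you say.
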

\begin{proof}
    This is a simple consequence of the arguments in \cite[Lemma 4.8]{chaidez2022contact}. More accurately, {\cite[eq. (4.3)]{chaidez2022contact}} states that 
    $\indRS(S) \equiv n-1-d \mod 2$. 
    Moreover, {\cite[eq. (4.2)]{chaidez2022contact}} states that when perturbing a Besse contact form via a Morse function on $S$, the Conley--Zehnder indices of the nondegenerate orbits are equal to 
        $$n-1+\ind_{\operatorname{Morse}}^S(q) \mod 2$$
        where $q$ is the corresponding critical point. In particular, the minimal CZ index is obtained for Morse index 0, and thus
        $$\mu_-(S) \equiv \indRS(S) - d \equiv n-1 \mod 2.\qedhere$$
\end{proof}
Finally, the following lemma is a known phenomenon in Morse--Bott symplectic homology. It follows from \cite{bourgeois2009symplectic}, but for the sake of completeness we include a short proof in the appendix.
\begin{lemma}\label{lem:local_SH_non-zero}
    Let $S_T$ be a  Morse--Bott family of Reeb orbits, then 
    \[SH_{*+\mu_-(S_T)}^{[T-\epsilon,T+\epsilon]}(W;\F_2) = H_*(Y_T;\F_2). 
    \] 
    In particular, 
    $$SH_{\mu_-(S_T)}^{[T-\epsilon,T+\epsilon]}(W;\F_2)\neq 0 \quad\text{and}\quad SH_{\mu_+(S_T)+1}^{[T-\epsilon,T+\epsilon]}(W;\F_2)\neq 0.$$
\end{lemma}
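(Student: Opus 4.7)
The plan is to invoke the Morse--Bott model for filtered symplectic homology of Bourgeois--Oancea \cite{bourgeois2009symplectic}, specialized to the single Morse--Bott family $S_T$. Choose $\epsilon>0$ small enough that $S_T$ is the only family of Reeb orbits with period in $[T-\epsilon,T+\epsilon]$ (possible since the Besse assumption forces a discrete period spectrum, see Remark~\ref{rem:finiteness_Besse}).

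First, I would perturb the defining Hamiltonian in a neighborhood of the orbits of $S_T$ by a Morse function $\bar f$ on the orbit space $S_T$, so that every periodic orbit of the perturbed flow with period in $[T-\epsilon,T+\epsilon]$ is nondegenerate. Each critical point $\bar q$ of $\bar f$ yields one such orbit $x_{\bar q}$, whose Conley--Zehnder index is given by the standard Morse--Bott index formula
$$\indCZ(x_{\bar q})=\mu_-(S_T)+\ind_{\operatorname{Morse}}(\bar q).$$
Each orbit contributes two Floer generators $\check x_{\bar q},\hat x_{\bar q}$ in degrees $\indCZ(x_{\bar q})$ and $\indCZ(x_{\bar q})+1$ respectively, so the generators of the localized Floer complex span degrees $\mu_-(S_T)$ through $\mu_-(S_T)+2d+1=\mu_+(S_T)+1$.

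Second, I would identify the Floer differential on these generators, via the cascade/neck-stretching argument of \cite{bourgeois2009symplectic}, with the Morse--Bott differential computing $H_*(Y_T;\F_2)$ on the circle bundle $Y_T\to S_T$; the pairing $(\check x_{\bar q},\hat x_{\bar q})$ precisely encodes the $S^1$-direction of this bundle, and the Euler class enters through the short cascades between $\hat x$ and $\check x$ over a single critical point. Passing to homology then gives the first displayed identity. For the ``in particular'' part, $Y_T$ is a non-empty closed manifold of dimension $2d+1$, and over $\F_2$ Poincar\'e duality (which needs no orientability hypothesis) yields $H_0(Y_T;\F_2)\neq 0$ and $H_{2d+1}(Y_T;\F_2)\neq 0$. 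Combining with the identity $\mu_+(S_T)=\mu_-(S_T)+2d$ from Definition~\ref{def:linearized_of_conn_comp} gives the asserted non-vanishing in degrees $\mu_-(S_T)$ and $\mu_+(S_T)+1$.

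The main obstacle is justifying the identification of the Floer differential with the Morse--Bott differential on $Y_T$ (rather than on $S_T$): this relies on a careful cascade/compactness argument and on handling the possible orbifold points of $S_T$ arising from non-free orbits in the almost-free $S^1$-action. Since $Y_T$ itself is a smooth manifold, one can sidestep the orbifold issue by phrasing the cascade count directly on $Y_T$ (so that all moduli spaces live over a smooth base), after which the remainder of the argument is the standard transversality, compactness and gluing package of \cite{bourgeois2009symplectic}; this is the material to be developed in Appendix~\ref{app:MB}.
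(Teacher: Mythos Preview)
Your outline is correct in spirit and would lead to a valid proof, but the paper's Appendix~\ref{app:MB} takes a more elementary route that sidesteps both the cascades and the orbifold issue you flag.  Rather than perturbing the \emph{contact form} by a Morse function $\bar f$ on the orbit space $S_T$ (which produces isolated Reeb orbits, each contributing a pair $\check x,\hat x$, and then requires the Bourgeois--Oancea cascade differential and a Gysin-type argument to recover $H_*(Y_T)$), the paper perturbs the \emph{Hamiltonian} directly by a Morse function $f$ on the smooth manifold $Y_T$ itself.  Concretely, one sets
\[
H_\delta(t,z)=h(\rho)+\delta\,\beta(z)\,f\bigl(\varphi^{-Tt}(y)\bigr),
\]
so that one-periodic orbits of $H_\delta$ in the relevant action window are in bijection with $\operatorname{Crit}(f|_{Y_T})$, with a single Floer generator per critical point and index $\mu_-(S_T)+\ind_{\operatorname{Morse}}(f,p)$.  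The key trick is then an $S^1$-equivariance: the map $u\mapsto\bar u(s,t):=\varphi^{-Tt}u(s,t)$ conjugates the Floer equation for $(H_\delta,J)$ to the Floer equation for the $C^2$-small autonomous Hamiltonian $\delta f$, whence the classical ``Floer $=$ Morse'' result applies and no cascades are needed.

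What your approach buys is conceptual alignment with the general Bourgeois--Oancea framework and with the $\hat x,\check x$ description of $SH$ already set up in Section~\ref{subsec:SH}; what the paper's approach buys is that it never passes through the orbifold $S_T$, needs no compactness/gluing for cascades, and reduces everything to the standard $C^2$-small lemma.  Your closing suggestion to ``phrase the cascade count directly on $Y_T$'' is in fact exactly the simplification the paper implements --- but once you do that, the $\hat x/\check x$ dichotomy disappears and no cascades remain.
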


The next proposition is a main new  ingredient in the proof of the main results of this paper.

\begin{prop}\label{prop:Besse_is_dyn_conv}
        Any Besse star-shaped domain is dynamically convex.
\end{prop}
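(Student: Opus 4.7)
The plan is to argue by contradiction. Suppose some Morse--Bott component $S$ of period $T$ has $\mu_-(S)\le n-1$; by the parity lemma (Lemma~\ref{lem:pairity_from_CDPT}), $\mu_-(S)$ has the same parity as $n-1$, so it suffices to rule out $\mu_-(S)\in\{n-1,n-3,\dots\}$. After rescaling the Besse flow to common period $1$, the principal family $S_1$ is maximally degenerate, so $\mu_-(S_1)=\indRS(S_1)-(n-1)$. Lemma~\ref{lem:positive_mean_index} together with the parity lemma (with $d=n-1$) forces $\indRS(S_1)$ to be a positive even integer, hence $\indRS(S_1)\ge 2$. The proposition amounts to upgrading this to $\indRS(S_1)\ge 2n$, and more generally to $\mu_-(S)\ge n+1$ for every Morse--Bott component.

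I would then apply Lemma~\ref{lem:local_SH_non-zero} to the hypothetical bad component $S$ and to its iterates $S^p$ in the Besse flow, extracting nonvanishing filtered classes in $SH^{[T-\epsilon,T+\epsilon]}_{\mu_-(S)}(W;\F_2)$ at action near $T$, and analogous nonvanishing classes at actions near $pT$ at degrees growing like $p\hat\mu(S)-(n-1)$. Thus the index windows $[\mu_-(S^p),\mu_+(S^p)+1]$ retain bounded width under iteration while their mean degrees grow linearly. Smith's inequality (Remark~\ref{rem:smith}) applied iteratively at $p=2$ relates filtered SH across doubled action windows and pins down the persistence structure of these low-degree bars.

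The final step would combine the above with the graded spectrality of the equivariant capacities $c_k(W)$: since $CH_*(W)=\F$ is concentrated in the grades $n-1+2k$, each $c_k$ is realized by a Morse--Bott family whose index window covers $n-1+2k$. The existence of a component with $\mu_-(S)\le n-1$ produces low-degree bars that, once propagated via Smith iteration, cannot all be absorbed by this one-dimensional equivariant structure, yielding the desired contradiction.

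The main obstacle is this last matching step: turning the non-equivariant persistence data from Lemma~\ref{lem:local_SH_non-zero} and Smith iteration into a sharp inconsistency with the equivariant capacities. This likely requires an adaptation of the persistence-module and Smith-ladder techniques of \cite{cineli2024closed} to the Morse--Bott Besse setting, as announced in the introduction.
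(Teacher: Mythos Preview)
Your proposal is incomplete --- you acknowledge this yourself when you call the last matching step ``the main obstacle''. But more importantly, the heavy machinery you are reaching for (Smith iteration, equivariant capacities, persistence ladders from \cite{cineli2024closed}) is not needed, and in the logical structure of the paper would in fact be circular: the proof that $\dim SH^t(W)=1$ (Proposition~\ref{prop:SH_1_dim}) uses dynamical convexity as an input via the index recurrence statements, so you cannot invoke that circle of ideas here.

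The paper's argument is much shorter and rests on a single combinatorial idea you missed. Assume $W$ is not dynamically convex. By Lemma~\ref{lem:positive_mean_index} and (\ref{eq:mean_index_bound}) the set of values $\mu_-(x)$ is bounded below, so there is a minimal value $k\le n$. Among all Morse--Bott components realizing this minimum, pick one, $S\subset S_T$, of \emph{largest action}~$T$. Lemma~\ref{lem:local_SH_non-zero} gives a nonzero class in $SH_k^{[T-\epsilon,T+\epsilon]}(W;\F_2)$, while $SH_k(W;\F_2)=0$; hence there is a bar $I$ of degree $k$ starting at $T$ and ending at some $T'>T$. (It cannot be a bar ending at $T$: such a bar would have degree $k-1$ and would start either at an orbit with $\mu_-\le k-1$, violating minimality of $k$, or at the point class $p$ in degree $n\ne k-1$.) Now any component $S'\subset S_{T'}$ has $\mu_-(S')>k$ by maximality of $T$, and by Lemma~\ref{lem:pairity_from_CDPT} $\mu_-(S')\equiv k\pmod 2$, so $\mu_-(S')\ge k+2$. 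But the bar constraint $\deg(I)\ge \mu_-(S')-1$ gives $k\ge k+1$, a contradiction.

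So the whole proof is: positivity of the mean index gives a finite minimum $k$; pick it with maximal action; parity plus the barcode end-point constraint finishes. Your outline had all the ingredients (Lemmas~\ref{lem:positive_mean_index}, \ref{lem:pairity_from_CDPT}, \ref{lem:local_SH_non-zero}) but not this extremal choice, which is what replaces the Smith/equivariant machinery entirely.
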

\begin{proof}
    We need to show that for any connected component $S$ of a Morse--Bott family, 
    $$\mu_-(S) \geq n+1.$$
    By Lemma~\ref{lem:positive_mean_index}, $\hat\mu(S_1) = \indRS(S_1)>0$. Since all mean indices of other orbits are positive multiples of $\indRS(S_1)$, they must be positive as well and by (\ref{eq:mean_index_bound}) 
    \[
    \mu_-(x)\geq -n+1 \quad\text{for all Reeb orbits }x.
    \]
    Assume that $W$ is not dynamically convex, then the above lower bound guarantees that there is an orbit of minimal index. Let $k\leq n$ be the minimal value of $\mu_-$ over all Reeb orbits. Among all (connected components of) Morse--Bott families with $\mu_-=k$, let $S\subset S_T$ be the one with the largest action. 
    By Lemma~\ref{lem:local_SH_non-zero}, 
    \[
    SH_k^{[T-\epsilon,T+\epsilon]}(W;\F_2)\neq 0\quad \text{while} \quad SH_k(W;\F_2) =0.
    \]
    This means that there must be a bar $I$ starting at action $T$ with degree $k$, and ending at some action $T'>T$ (alternatively - after a perturbation there must be an orbit coming from a Morse--Bott family $S_{T'}$, ``killing" the generator in degree $k$). Notice  that for any connected component $S'$ of $S_{T'}$,
    \[
    \mu_-(S')>k, \text{since $T$ is the largest action in the minimal degree }k.
    \]
    On the other hand, by Lemma~\ref{lem:pairity_from_CDPT}
    \[k=\mu_-(S) \equiv n-1 \equiv \mu_-(S')\mod 2, \]
    so we must have $\mu_-(S')\geq k+2$, in contradiction to $k=\deg(I) \geq \mu_-(S')-1$ (alternatively, after a perturbation there could not be an orbit from $S_{T'}$ of degree $k+1$).   
\end{proof}
\begin{cor}\label{cor:RS_geq_2n}
    The mean index of the minimal common period Morse--Bott family satisfies 
    $$\hat\mu(S_1) = \indRS(S_1) = \mu_-(S_1)+n-1\geq n+1+n-1= 2n.$$ 
\end{cor}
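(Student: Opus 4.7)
The plan is to assemble the corollary from three facts already established in the excerpt. By the normalization $\varphi^{1}=\operatorname{id}$, the fixed-point manifold of the time-$1$ flow is $Y_{1}=\partial W$, so the orbifold $S_{1}=Y_{1}/S^{1}$ has real dimension $2n-2$, giving $d(S_{1})=n-1$. Applied to $S_{1}$, the formula recorded in Definition~\ref{def:linearized_of_conn_comp} yields
\[
\mu_{-}(S_{1})=\indRS(S_{1})-d(S_{1})=\indRS(S_{1})-(n-1),
\]
which, after rearrangement, is the middle equality of the corollary.

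Next, I would verify that $\hat\mu(S_{1})=\indRS(S_{1})$. Since $\varphi^{1}=\operatorname{id}$ on $\partial W$, along any orbit representing $S_{1}$ the time-$1$ linearized Reeb flow restricted to the contact distribution is the identity. Under the trivialization used in Lemma~\ref{lem:linearized_flows}, this means the associated path $\Phi\in\widetilde{\Sp}(2n-2)$ satisfies $\Phi(1)=\operatorname{id}$, so $\Phi$ is maximally degenerate in the sense of Section~\ref{sec:CZ} (all eigenvalues equal $1$). The identity $\indRS(\Phi)=\hat\mu(\Phi)$ for maximally degenerate paths, recorded in Section~\ref{sec:CZ}, then yields $\indRS(S_{1})=\hat\mu(S_{1})$, giving the left-hand equality.

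Finally, the inequality $\indRS(S_{1})\geq 2n$ follows by combining the identity of the first paragraph with the dynamical convexity established in Proposition~\ref{prop:Besse_is_dyn_conv}. That proposition gives $\mu_{-}(S)\geq n+1$ for every connected component $S$ of every Morse--Bott family, and in particular for $S_{1}$. Substituting into the relation above,
\[
\indRS(S_{1})=\mu_{-}(S_{1})+n-1\geq (n+1)+(n-1)=2n.
\]
Since the corollary is bookkeeping on top of Proposition~\ref{prop:Besse_is_dyn_conv}, no serious obstacle is expected; the only subtlety is to keep straight that $\mu_{-}(S_{1})$, defined in Definition~\ref{def:linearized_of_conn_comp} as the minimum over connected components of $Y_{1}$, is still bounded below by $n+1$, which holds precisely because dynamical convexity was proved component-wise.
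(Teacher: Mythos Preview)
Your proof is correct and is precisely the argument the paper has in mind: the corollary is left without proof because it is immediate from Proposition~\ref{prop:Besse_is_dyn_conv} together with the maximal-degeneracy identity $\hat\mu(\Phi)=\indRS(\Phi)$ and the relation $\mu_-(S_1)=\indRS(S_1)-d(S_1)$ with $d(S_1)=n-1$. The only remark is that your closing caveat about connected components is moot here, since $Y_1=\partial W\cong S^{2n-1}$ is already connected.
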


\section{Symplectic Homology of Besse contact forms}
Our main goal for this section is to prove the following statement.
\begin{prop}\label{prop:SH_1_dim}
     Fix a ground field $\mathbb{F}$. For any Besse contact form on $S^{2n-1}$ and every $t>0$,
     \[
     \dim SH_*^t(W;\mathbb{F})=1.
     \]
\end{prop}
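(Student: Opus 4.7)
The plan is to sandwich $\dim SH^t(W;\F)$ between a Smith-inequality lower bound of $1$ and a matching upper bound obtained by inductive barcode analysis along the action spectrum. For the lower bound, Remark~\ref{rem:smith} combined with $\dim H(W;\F)=1$ immediately gives $\dim SH^t(W;\F)\ge 1$ for every $t>0$ and every field $\F$; the task thus reduces to the matching upper bound $\dim SH^t(W;\F)\le 1$.

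For the upper bound, I would proceed by induction on the discrete action spectrum $0<T_1<T_2<\cdots$ (discrete by Morse--Bott nondegeneracy; see Remark~\ref{rem:finiteness_Besse}), maintaining the invariant that $SH^t(W;\F)$ is one-dimensional on every interval $(T_{j-1},T_j]$. The base case is immediate: for $0<t<T_1$, the chain complex $SC^t(W;\F)$ contains only the constant generator $p$ in degree $n$. For the inductive step, the long exact sequence of the pair $(SC^{T_j+\epsilon},SC^{T_j-\epsilon})$,
\begin{equation*}
\cdots\to SH^{T_j-\epsilon}_k\to SH^{T_j+\epsilon}_k\to SH^{[T_j-\epsilon,T_j+\epsilon]}_k\to SH^{T_j-\epsilon}_{k-1}\to\cdots,
\end{equation*}
together with Lemma~\ref{lem:local_SH_non-zero} identifying $SH^{[T_j-\epsilon,T_j+\epsilon]}(W;\F)\cong H_{*-\mu_-(S_{T_j})}(Y_{T_j};\F)$, reduces the inductive step to showing that the number of bars starting at $T_j$ equals the number of bars ending at $T_j$.

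The constraints I would exploit to match births with deaths are: dynamical convexity (Proposition~\ref{prop:Besse_is_dyn_conv}), which places the local contribution in degrees $[\mu_-(S_{T_j}),\mu_+(S_{T_j})+1]\subset[n+1,\infty)$; parity (Lemma~\ref{lem:pairity_from_CDPT}), which forces $\mu_-(S_{T_j})\equiv n-1\pmod 2$; the Euler characteristic identity $\chi(SH^t(W;\F))=(-1)^n$, obtained at the chain level by pairing $\check x$ with $\hat x$ for each orbit (so only $p$ contributes to $\chi$); and Poincar\'e duality on the closed odd-dimensional manifold $Y_{T_j}$, which makes $H_*(Y_{T_j};\F)$ palindromic. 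Together these ingredients should pin down the birth-death bookkeeping so that exactly one bar ends and exactly one bar starts at $T_j$, leaving $\dim SH^{T_j+\epsilon}(W;\F)=1$.

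The main obstacle I anticipate is making this matching rigorous, since a priori $H_*(Y_{T_j};\F)$ can have large dimension and produce many new bars, and one must rule out ``extra'' surviving generators of opposite parity. The plan here is to adapt the barcode-level Smith-inequality arguments of \cite{cineli2024closed} to the Morse--Bott Besse setting, using the iteration structure $\varphi^1=\operatorname{id}$ to inductively pair bars born from iterates of Morse--Bott families with bars dying from the same families at smaller action. This matching step will be the technical heart of the argument.
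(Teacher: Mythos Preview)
Your lower bound is correct and matches the paper. The gap is in the upper bound: your induction on the action spectrum cannot close without already knowing that each $Y_{T_j}$ is an $\F$-homology sphere. Concretely, if by induction $\dim SH^{T_j-\epsilon}=1$, then at most one bar can end at $T_j$; combining this with Lemma~\ref{lem:local_SH_non-zero} and the long exact sequence you wrote gives $\dim SH^{T_j+\epsilon}=1+\dim H_*(Y_{T_j};\F)-2\,\mathrm{rk}(\partial)$, and since $\mathrm{rk}(\partial)\le 1$ you need $\dim H_*(Y_{T_j};\F)\le 2$. None of the constraints you list (dynamical convexity, parity, Euler characteristic, Poincar\'e duality) rules out, say, $Y_{T_j}\cong S^1\times S^{2d}$; they only give $\chi(Y_{T_j})=0$, which is automatic for odd-dimensional manifolds. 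In the paper the homology-sphere property is Theorem~\ref{thm:MB_families}, and it is deduced \emph{from} Proposition~\ref{prop:SH_1_dim}, so invoking it here would be circular.

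The paper's proof proceeds by a completely different route and never inducts along the spectrum. It first uses the index-recurrence package of \cite{cineli2024closed} (Proposition~\ref{prop:index_recurrence}) to produce a sequence $C_l\to\infty$ with bounded gap at which every bar crossing $C_l$ is forced into a single degree $d_l+n$; then the Euler-characteristic identity pins $\dim SH^{C_l}=1$ (Proposition~\ref{prop:SH^C_l_dim1}). The global upper bound for arbitrary $t$ comes from applying the Smith inequality \eqref{eq:smith} in the direction $\dim SH^{(0,\tau)}(W;\F_p)\le \dim SH^{(0,p^k\tau)}(W;\F_p)$ and using the bounded-gap property to land $p^k\tau$ on some $C_l$. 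Your final paragraph gestures at ``adapting barcode-level Smith-inequality arguments,'' but the actual mechanism is this upward Smith bound together with index recurrence at large action, not an inductive birth--death matching at each $T_j$.
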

This proposition is an adaptation of \cite[Theorem C]{cineli2024closed} to the case of Besse contact forms, instead of dynamically convex contact forms with finitely many simple orbits. Before discussing the proof let us state a simple corollary that is mentioned in \cite[Theorem E, (O1)]{cineli2024closed}.
\begin{cor}\label{cor:diagonal_barcode}
    For Besse star-shaped domains and $\F=\F_2$, bars with larger period have larger  degree. More accurately, if $I_1,I_2$ are bars such that the action of the beginning of $I_2$ is not smaller that the action of the end point of $I_1$, then $\deg(I_1)<\deg(I_2)$ 
\end{cor}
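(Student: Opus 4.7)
My plan is to deduce Corollary~\ref{cor:diagonal_barcode} directly from Proposition~\ref{prop:SH_1_dim}, combined with the long exact sequence of an action-filtered pair, the Morse--Bott identification of Lemma~\ref{lem:local_SH_non-zero}, and dynamical convexity (Proposition~\ref{prop:Besse_is_dyn_conv}). Since $\dim SH_*^t(W;\F_2)=1$ for every $t>0$, any two bars of $\mathcal{B}(W;\F_2)$ must have disjoint interiors in action; the bars are therefore totally ordered by action, and the hypothesis $a_2 \ge b_1$ says exactly that $I_2$ comes (weakly) after $I_1$ in this order. Since the strict order on degrees is transitive, it suffices to treat consecutive bars $I_1=(T_{k-1},T_k]$ and $I_2=(T_k,T_{k+1}]$ meeting at a single transition period $T=T_k$.

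For each such transition I would apply the long exact sequence of the filtered pair,
\[
\cdots\to SH^{T-\epsilon}_m\to SH^{T+\epsilon}_m\to SH^{[T-\epsilon,T+\epsilon]}_m\to SH^{T-\epsilon}_{m-1}\to\cdots
\]
Proposition~\ref{prop:SH_1_dim} makes the two flanking groups one-dimensional, concentrated in degrees $\alpha:=\deg(I_1)$ and $\beta:=\deg(I_2)$ respectively, and the inclusion between them vanishes since $I_1$ dies and $I_2$ is born at $T$. Lemma~\ref{lem:local_SH_non-zero} identifies the local group with $H_{*-\mu_-(S_T)}(Y_T;\F_2)$. A direct diagram chase then pins down exactly two nonzero classes in $H_*(Y_T;\F_2)$, one at degree $\alpha+1-\mu_-(S_T)$ witnessing the death of $I_1$ (via the connecting map) and one at degree $\beta-\mu_-(S_T)$ witnessing the birth of $I_2$ (via the projection). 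Poincar\'e duality over $\F_2$ applied to the closed manifold $Y_T$, together with this rank-$2$ constraint, forces $Y_T$ to be a connected $\F_2$-homology sphere whose only nonzero classes sit in degrees $0$ and $\dim Y_T$; hence $\{\alpha+1,\beta\}=\{\mu_-(S_T),\mu_+(S_T)+1\}$ as an unordered pair.

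It remains to show that among the two a priori possible pairings it is the ``forward'' one, $\alpha=\mu_-(S_T)-1$ and $\beta=\mu_+(S_T)+1$, that is realised, giving $\beta-\alpha=2d(S_T)+2>0$. I anchor the induction at the initial bar attached to the critical point $p$ (degree $n$): at the very first transition the ``reverse'' pairing would demand $\mu_+(S_{T_1})=n$, contradicting $\mu_-(S_{T_1})\ge n+1$ from Proposition~\ref{prop:Besse_is_dyn_conv}, so the first transition must be forward. For the induction step I would combine the explicit formula $\deg(I_k)=\mu_+(S_{T_{k-1}})+1$ inherited from the forward pairing at $T_{k-1}$ with the mean-index growth guaranteed by Corollary~\ref{cor:RS_geq_2n} and the parity information of Lemma~\ref{lem:pairity_from_CDPT} to exclude the reverse pairing at $T_k$. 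The main obstacle will be the delicate control of $\mu_\pm$ across iterations in the Besse setting, where simple-orbit periods can be small; I expect to handle this using the rank-$1$ structure of $\operatorname{Spec}(\partial W)$ together with the comparison $\mu_-(S_T)\ge\hat\mu(S_T)-n+1$ from \eqref{eq:mean_index_bound}.
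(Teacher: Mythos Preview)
Your approach is essentially the paper's: reduce to consecutive bars via Proposition~\ref{prop:SH_1_dim}, use Lemma~\ref{lem:local_SH_non-zero} to obtain the dichotomy $\{\alpha+1,\beta\}=\{\mu_-(S_T),\mu_+(S_T)+1\}$, anchor at the degree-$n$ class $p$, and propagate by induction. The propagation is, however, simpler than you anticipate: by Lemma~\ref{lem:pairity_from_CDPT} both $\mu_\pm(S_T)\equiv n-1\pmod 2$, so the forward pairing places every bar degree in parity $n$ while the reverse pairing places it in parity $n-1$; since $I_k^+=I_{k+1}^-$, the parity is locked in once the base case fixes it, and no mean-index estimates or ``delicate control of $\mu_\pm$'' are needed.
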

This section is dedicated to carrying out the proofs of Proposition~\ref{prop:SH_1_dim}, Corollary~\ref{cor:diagonal_barcode} and Theorem~\ref{thm:MB_families}.
The proofs are essentially the same as in \cite{cineli2024closed}, given Proposition~\ref{prop:Besse_is_dyn_conv} which established dynamical convexity for Besse star-shaped domains. Of course, Besse star-shaped domains have infinitely many simple Reeb orbits, but the fact that there are finitely many connected components of Morse--Bott families with simple orbits allows us to adopt the arguments with minimal changes. \\

Let us fix some notations:
\begin{enumerate}
    \item Denote by $X_1,\dots, X_N$ all connected components of Morse--Bott families of orbits containing simple orbits. As explained in Remark~\ref{rem:finiteness_Besse}, there are  finitely many such components. 
    \item We denote by $a_j$ the action of the Morse--Bott component $X_j$. Note that $a_j=a_m$ if $X_j$ and $X_m$ are connected components of the same Morse--Bott family.
    \item Let $\Phi_1,\dots, \Phi_N\in \widetilde{\Sp}(2n-2)$ be the linearized flows of the Morse--Bott components $X_1,\dots, X_N$, as in Definition~\ref{def:linearized_of_conn_comp}.
\end{enumerate}
Notice that 
\[
\hat\mu(\Phi_j) = a_j\cdot \hat \mu(S_1)\quad  \text{and hence the ratio}\quad \frac{a_j}{\hat\mu(X_j)} \quad\text{is constant.}
\]
The following proposition summarizes some of the results of Section 3.2 from \cite{cineli2024closed} in the particular case where the action index ratio is the same for all orbits\footnote{In the language of \cite{cineli2024closed} - there is only one cluster for Besse contact forms.}.
\begin{prop}[Summary of {\cite[Theorem 3.3(IR1,IR5) and Corollary 3.5]{cineli2024closed}}]\label{prop:index_recurrence}
    Let $(\Phi_j, a_j)$ pairs of linearized flows and actions, such that $\hat\mu(\Phi_j) /a_j$ is independent of $j$. For any $\eta>0$
    , there exist sequences $\{C_l\}\subset \R\setminus \operatorname{Spec}(\partial W) $ of bounded gap, $\{d_l\}\subset 2\N$ and $\{k_{jl}\}\subset \N$  all converging to infinity as $l\rightarrow \infty$ such that for all $1\leq j\leq N$
    \begin{itemize}
        \item For all $k>k_{jl}$: $ka_j> C_l$ and $\mu_-(\Phi_j^k)\geq d_l+n+1$.
        \item For all $k<k_{jl}$: $ka_j<C_l-\eta$ and $\mu_+(\Phi_j^k) \leq d_l - 2$.
        \item For $k=k_{jl}$: $ka_j\in(C_l-\eta, C_l)$ and $\mu_+(\Phi_j^k)\leq d_l+n-1$.
    \end{itemize}
\end{prop}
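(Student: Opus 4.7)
The plan is to specialize the Diophantine-type argument of \cite[Theorem 3.3 and Corollary 3.5]{cineli2024closed} to the Besse setting. The key simplification relative to the general dynamically convex case treated in \cite{cineli2024closed} is that the common ratio $\hat\mu(\Phi_j)/a_j = \rho := \hat\mu(S_1)$ places all simple orbits in a single ``cluster.'' By Corollary~\ref{cor:RS_geq_2n}, $\rho \geq 2n$, and by Remark~\ref{rem:finiteness_Besse}, $\operatorname{Spec}(\partial W) \subset \tfrac{1}{M}\Z$ for some positive integer $M$, with each $a_j = p_j/M$ for positive integers $p_j$.

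First, I would produce $\{C_l\}$ and $\{k_{jl}\}$ by a simultaneous Diophantine argument. Given $\eta > 0$, the set of $C_l \in \R$ for which $C_l \bmod a_j$ lies in a fixed subinterval of $(0,\eta)$ for every $j$ is a bounded-gap subset of $\R \setminus \operatorname{Spec}(\partial W)$, since the tuple of residues modulo $(a_1, \dots, a_N)$ is periodic in $C_l$ with period $\operatorname{lcm}(p_1,\dots,p_N)/M$. Choose $\{C_l\}$ from this set tending to infinity, and let $k_{jl}$ be the unique integer with $k_{jl} a_j \in (C_l - \eta, C_l)$; the relations $k a_j > C_l$ for $k > k_{jl}$ and $k a_j < C_l - \eta$ for $k < k_{jl}$ then follow automatically.

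Next, I translate the action inequalities to index inequalities via the mean-index bound (\ref{eq:mean_index_bound}). Choose $d_l \in 2\N$ positioned so that $d_l/\rho$ is close to $C_l$; the fact that $d_l$ is even is consistent with the parity of $\mu_\pm$ dictated by Lemma~\ref{lem:pairity_from_CDPT}. Since $\hat\mu(\Phi_j^k) = k\rho a_j$, for $k > k_{jl}$ the inequality $k a_j > C_l$ yields $\mu_-(\Phi_j^k) \geq k\rho a_j - (n-1) > d_l + n + 1$; analogously, $k a_j < C_l - \eta$ for $k < k_{jl}$ gives $\mu_+(\Phi_j^k) \leq k\rho a_j + (n-1) \leq d_l - 2$ once $\rho \eta$ is sufficiently large relative to $n$.

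The main difficulty is to arrange conditions (a) and (c) simultaneously at the boundary iterates $k_{jl}$ and $k_{jl}+1$: the naive $(n-1)$-slack of the mean-index bound is too generous to allow both $\mu_-(\Phi_j^{k_{jl}+1}) \geq d_l + n + 1$ and $\mu_+(\Phi_j^{k_{jl}}) \leq d_l + n - 1$ in general, since $(k_{jl}+1) a_j - k_{jl} a_j = a_j$ can be much smaller than $2n/\rho$. This is resolved by exploiting the finer structure of Besse iterates: at maximally degenerate iterations (those $k$ with $k a_j \in \N$, for which $\Phi_j^k$ acts trivially on $\xi$) the exact identity $\indRS(\Phi_j^k) = k\rho a_j$ holds, and the index jump bound (\ref{eq:index_jump_bound}) then controls nearby iterates. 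Combined with the parity constraint of Lemma~\ref{lem:pairity_from_CDPT} and $\rho \geq 2n$ from Proposition~\ref{prop:Besse_is_dyn_conv}, this refinement is what makes the simultaneous bounds hold. This upgrade is the technical core of the argument in \cite[Section 3.2]{cineli2024closed} and carries over essentially verbatim to our setting thanks to the single-cluster structure.
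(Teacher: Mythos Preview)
The paper does not prove this proposition at all: immediately after stating it, the authors write that ``these results are stated in \cite{cineli2024closed} solely in terms of finite collections of elements in $\widetilde{\Sp}(2n-2)\times \R_{>0}$, and therefore we will not repeat the proof.'' In other words, Proposition~\ref{prop:index_recurrence} is quoted as a black box from \cite{cineli2024closed}, valid for any finite collection $(\Phi_j,a_j)$ with constant action--index ratio, with no Besse-specific argument required. Your proposal therefore attempts to supply something the paper deliberately omits.

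Your sketch is broadly faithful to the shape of the argument in \cite{cineli2024closed} (single cluster, Diophantine choice of $C_l$, translation of action inequalities to index inequalities via (\ref{eq:mean_index_bound})), and you correctly identify the genuine difficulty at the boundary iterates $k_{jl}$ and $k_{jl}+1$. However, your proposed resolution is off. The index jump bound (\ref{eq:index_jump_bound}) compares a path $\Phi$ with a $C^0$-nearby path $\Phi'$ having the same endpoints; it says nothing about the relation between $\mu_\pm(\Phi_j^{k})$ and $\mu_\pm(\Phi_j^{k+1})$, which are iterates, not perturbations. So ``the index jump bound then controls nearby iterates'' is not a valid step. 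Moreover, the mechanism in \cite[Section~3.2]{cineli2024closed} is the common index jump theorem (Long--Zhu type index iteration theory), which works for arbitrary finite collections in $\widetilde{\Sp}(2n-2)\times\R_{>0}$ and makes no use of maximally degenerate iterates or any Besse structure; indeed, the theorem there is applied to dynamically convex flows with finitely many simple orbits, where no iterate need be maximally degenerate. Your suggestion that the Besse-specific identity $\indRS(\Phi_j^k)=k\rho a_j$ at integer actions is the key input is therefore a red herring: the result being quoted neither needs nor uses it.
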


These results are stated in \cite{cineli2024closed} solely in terms of finite collections of elements in $\widetilde{\Sp}(2n-2)\times \R_{>0}$, and therefore we will not repeat the proof from \cite{cineli2024closed}. We will use the above to prove the following:
\begin{prop}[Adaptation of {\cite[Proposition 4.6]{cineli2024closed}}] \label{prop:SH^C_l_dim1}
    Let $W$ be a Besse star-shaped domain. For every field $\mathbb{F}$ there exists a positive real sequence $C_l\rightarrow\infty$ with bounded gap such that
    \[
    \dim SH^{C_l}(W;\mathbb{F}) = 1 \quad \text{for all }C_l.
    \]
\end{prop}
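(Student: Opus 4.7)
The plan is to adapt the argument of \cite[Proposition 4.6]{cineli2024closed}, which proves the analogous statement under the hypothesis of finitely many simple Reeb orbits. Two ingredients allow the transfer to the Besse setting: Proposition~\ref{prop:Besse_is_dyn_conv} supplies dynamical convexity, while Remark~\ref{rem:finiteness_Besse} supplies a finite list $X_1,\dots,X_N$ of Morse--Bott components containing all simple orbits; the linearized flow data $(\Phi_j,a_j)_{j=1}^N$ is thus a \emph{finite} collection, and the full index-recurrence machinery of \cite{cineli2024closed} becomes available.

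First I would apply Proposition~\ref{prop:index_recurrence} to this finite list -- the common-ratio hypothesis $\hat\mu(\Phi_j)/a_j=\hat\mu(S_1)$ is automatic since every orbit is an iterate of a simple orbit living on $S_1$ -- obtaining sequences $C_l\to\infty$ of bounded gap (and avoiding $\operatorname{Spec}(\partial W)$), $d_l\to\infty$ in $2\N$, and $k_{jl}\to\infty$ with the prescribed action--index trichotomy. The lower bound $\dim SH^{C_l}(W;\F)\geq 1$ is immediate from the Smith inequality (Remark~\ref{rem:smith}) combined with $\dim H(W;\F)=1$, so the task is to establish the matching upper bound.

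The upper bound is obtained by bar classification. Any bar $I=(a,b]\in\mathcal{B}(W;\F)$ alive at $C_l$ has $a<C_l<b$ and is constrained by $\deg(I)\in[\mu_-(x),\mu_+(x)+1]\cap[\mu_-(y)-1,\mu_+(y)]$, where $x,y$ are the birth and death orbits of actions $a,b$. The case $a=0$ is the fundamental bar born from the critical point $p$ in degree $n$, and accounts for the one dimension forced by the lower bound. If $a\in(0,C_l-\eta]$, then $x=\Phi_j^k$ has $k<k_{jl}$, so $\mu_+(x)\leq d_l-2$ and $\deg(I)\leq d_l-1$; simultaneously $b>C_l$ forces $y=\Phi_m^{k'}$ with $k'>k_{ml}$, $\mu_-(y)\geq d_l+n+1$, and $\deg(I)\geq d_l+n$ -- a contradiction ruling out such ``tail bars''. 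If $a\in(C_l-\eta,C_l)$ (a ``window bar''), the same two bounds pin $\deg(I)=d_l+n$ exactly, with $\mu_+(x)=d_l+n-1$ and $\mu_-(y)=d_l+n+1$.

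The main obstacle is eliminating the window bars. The plan is to exploit the residual freedom in Proposition~\ref{prop:index_recurrence} (which holds for every $\eta>0$ and for any bounded-gap sequence avoiding the spectrum): refine $\{C_l\}$ to a still-bounded-gap subsequence along which no $ka_j$ in the window $(C_l-\eta,C_l)$ simultaneously carries $\mu_+(\Phi_j^k)=d_l+n-1$ and can be paired with a head orbit satisfying $\mu_-=d_l+n+1$. The parity constraint $\mu_-\equiv n-1\pmod{2}$ from Lemma~\ref{lem:pairity_from_CDPT}, the linear scaling $d_l\sim \hat\mu(S_1)\cdot C_l$ from Corollary~\ref{cor:RS_geq_2n}, and the finiteness of simple Morse--Bott components together make this refinement possible; if a window bar still persisted, iterating the Smith inequality~\eqref{eq:smith} with $\F_p$ coefficients would propagate an excess dimension to $SH^{p^mC_l}(W;\F_p)$ for all $m$, which is incompatible with the bounded number of Morse--Bott components available to host such bars. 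This yields $\dim SH^{C_l}(W;\F)\leq 1$ and completes the proof.
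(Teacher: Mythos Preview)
Your bar-classification set-up is right up to the point where you have shown that every bar $I=(a,b]$ alive at $C_l$ with $a>0$ is forced into the single degree $d_l+n$. This is exactly the content of Lemma~\ref{lem:support_of_SH^C_l}. But from there the argument goes off track in two places.

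First, the fundamental bar born at $a=0$ does \emph{not} account for the one surviving dimension at $C_l$. Since $SH_*(W;\F)=0$, that bar is finite; its death orbit $y$ has action $b$, and if $b>C_l$ then the head constraint $\mu_-(y)\geq d_l+n+1$ forces $\deg(I)\geq d_l+n$, while $\deg(I)=n$. As $d_l\to\infty$, this is impossible for large $l$: the fundamental bar has already ended. So for $C_l$ large enough, \emph{every} bar crossing $C_l$ is a ``window bar'' of degree $d_l+n$.

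Second, and more seriously, your plan to refine $\{C_l\}$ so as to \emph{eliminate} window bars cannot succeed: there must always be at least one (by your own lower bound $\dim SH^{C_l}\geq 1$), and it will be a window bar once $l$ is large. The parity and Smith-propagation sketch you give does not pin down a contradiction; there is nothing preventing a single degree-$(d_l+n)$ bar from sitting across each $C_l$.

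The missing idea is the Euler characteristic. Once you know $SH^{C_l}_*(W)$ is concentrated in the single degree $d_l+n$, the identity
\[
\sum_m (-1)^m \dim SH^{C_l}_m(W)=(-1)^n\chi(W)=(-1)^n
\]
(equation~\eqref{eq:Euler_char} in the paper) collapses to $(-1)^{d_l+n}\dim SH^{C_l}_{d_l+n}(W)=(-1)^n$, and since $d_l$ is even this gives $\dim SH^{C_l}(W)=1$ directly. No refinement, no Smith iteration, no parity gymnastics needed at this stage.
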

The proof will require the following lemma.
\begin{lemma}[Adaptation of {\cite[Lemma 4.9]{cineli2024closed}}] \label{lem:support_of_SH^C_l}
    Let $C_l$, $d_l$ as in Proposition~\ref{prop:index_recurrence}. There exists $K>0$ such that
    \[
    \dim SH^{C_l}(W) = \dim SH_{d_l+n}^{C_l}(W) \quad \text{for all }C_l>K.
    \]
\end{lemma}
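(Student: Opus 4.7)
The plan is to show that every bar $(a,b]$ of the barcode $\mathcal{B}(W;\F)$ satisfying $a \leq C_l < b$ is concentrated in the single degree $d_l + n$. Since $\dim SH^{C_l}(W)$ counts exactly these ``alive at $C_l$'' bars, this immediately yields the claim.

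First, I would use Proposition~\ref{prop:index_recurrence} to classify the generators of $SC^{C_l}(W;\F)$. For each Morse--Bott component $X_j$ the iterates $X_j^k$ split into three regimes determined by the threshold $k_{jl}$: for $k<k_{jl}$ the action $ka_j$ lies below $C_l-\eta$ and $\mu_+(\Phi_j^k)\leq d_l-2$, so all associated generators have action below $C_l$ and degree at most $d_l-1$; for $k=k_{jl}$ one has $ka_j\in (C_l-\eta, C_l)$ and $\mu_+(\Phi_j^k)\leq d_l+n-1$, giving generators with action below $C_l$ and degree at most $d_l+n$; for $k>k_{jl}$ the action exceeds $C_l$ and $\mu_-(\Phi_j^k)\geq d_l+n+1$, giving generators with action above $C_l$ and degree at least $d_l+n$.

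Next, fix a bar $(a,b]$ with $0<a\leq C_l<b$ and let $x,y$ be its creating and annihilating orbits, so that $\mathcal{A}(x)=a$ and $\mathcal{A}(y)=b$. By the regime analysis, $x$ lives in some $X_j^k$ with $k\leq k_{jl}$ and $y$ in some $X_m^{k'}$ with $k'>k_{m,l}$ (the case $k'=k_{m,l}$ is ruled out because then $\mathcal{A}(y)<C_l$). The barcode constraint $\deg(I)\in [\mu_-(x),\mu_+(x)+1]\cap [\mu_-(y)-1,\mu_+(y)]$ then gives $\deg(I)\leq \mu_+(\Phi_j^k)+1\leq d_l+n$ from the $x$-side and $\deg(I)\geq \mu_-(\Phi_m^{k'})-1\geq d_l+n$ from the $y$-side, forcing $\deg(I)=d_l+n$ (and incidentally $k=k_{jl}$).

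It remains to handle the distinguished bar issuing from $a=0$, namely the one born at the critical point $p$ in degree $n$. Since $W\subset \R^{2n}$ is star-shaped and hence Liouville deformation-equivalent to the round ball, $SH(W;\F)=0$, which forces this bar to have a finite right endpoint $b_p$. Setting $K:=b_p$ ensures that for every $C_l>K$ the $p$-bar is entirely contained in $(0,C_l]$ and contributes nothing to $SH^{C_l}(W)$, leaving only bars in degree $d_l+n$. The most delicate point is the regime split in the first step: it relies crucially on Proposition~\ref{prop:Besse_is_dyn_conv} and Corollary~\ref{cor:RS_geq_2n}, which force $\hat\mu(\Phi_j^k)$ to grow linearly in $k$ at a rate compatible with the bounded-gap sequence $\{C_l\}$, so that the intermediate regime $k=k_{jl}$ is the only one that can feed bars crossing $C_l$; once this is established the rest is a combinatorial bookkeeping of degrees.
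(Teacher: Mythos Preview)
Your argument is correct and follows essentially the same route as the paper: both proofs show that every bar $(a,b]$ containing $C_l$ has degree $d_l+n$ by using Proposition~\ref{prop:index_recurrence} to bound $\mu_+$ of the left endpoint from above by $d_l+n-1$ and $\mu_-$ of the right endpoint from below by $d_l+n+1$, and then invoking the barcode degree constraint. You are actually more explicit than the paper about the bar born at $a=0$ (the generator $p$), which is what produces the constant $K$ in the statement; the paper's proof passes over this case silently.

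One remark on your closing commentary: the regime split does \emph{not} rely on Proposition~\ref{prop:Besse_is_dyn_conv} or Corollary~\ref{cor:RS_geq_2n}. Proposition~\ref{prop:index_recurrence} is a black-box statement from \cite{cineli2024closed} about finite collections $(\Phi_j,a_j)$ with common action--index ratio, and in the Besse setting this ratio is automatically constant; dynamical convexity plays no role in the present lemma. So your final sentence misidentifies the key input, though this does not affect the validity of the proof itself.
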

\begin{proof}
    We will show that for any bar $I=(a,b]$ containing $C_l$, $\deg(I)=d_l+n$. Indeed, there exist Reeb orbits $x_j\in X_j$ and $x_{j'}\in X_{j'}$ such that their $k$ and $k'$ iterates satisfy 
    \[
    a=\cA(x_j^k) < C_l< \cA(x_{j'}^{k'}) = b.
    \]
    By Proposition~\ref{prop:index_recurrence}, we must have $k\leq k_{jl}$ and $k'> k_{j'l}$, and thus 
    \[
    \mu_+(x_j^k)\leq d_l+n-1 \quad \text{and}\quad \mu_-(x_{j'}^{k'})\geq d_l+n+1.
    \]
    In particular, the index difference is at least 2. However, the index difference between two Reeb orbits representing the ends of a single bar cannot be larger than 2 (see Section~\ref{subsec:SH} or  \cite[Theorem 2.16]{cineli2024closed}). Therefore the difference must be exactly 2 and:
    \[
    \deg(I) = \mu_-(x_{j'}^{k'})- 1 = \mu_+(x_j^k)+1 = d_l+n.
    \]
    To conclude, any bar $I$ that contains $C_l$ must lie in degree $d_l+n$, which proves the claim.
\end{proof}
\begin{proof}[Proof of Proposition~\ref{prop:SH^C_l_dim1}]
    As explained in equation (2.3) from \cite{cineli2024closed}, the Euler characteristic is independent of $t$:
    \begin{equation}\label{eq:Euler_char}
        \sum_m (-1)^m \dim SH^t_m(W) = (-1)^n \chi(W).
    \end{equation}
    Since $W$ is a star-shaped domain, $\chi(W)=1$. On the other hand, for $t=C_l$, Lemma~\ref{lem:support_of_SH^C_l} states that 
    \[\sum_m (-1)^m \dim SH^t_m(W)  = (-1)^{d_l+n} \dim SH^{C_l}_{d_l+n}(W) = (-1)^n SH^{C_l}_{d_l+n}(W),
    \]
    where the last equality uses the fact that $d_l$ is even (see Proposition~\ref{prop:index_recurrence}). From the Euler characteristic identity (\ref{eq:Euler_char}) it follows that
    \[
    \dim SH^{C_l}(W) = \dim SH^{C_l}_{d_l+n}(W )= 1.\qedhere
    \]
\end{proof}
We are now ready to prove the main result of this section.
\begin{proof}[Proof of Proposition~\ref{prop:SH_1_dim}]
    The proof is exactly the same as that of Theorem C in \cite[p.41]{cineli2024closed}. For the convenience of the reader we sketch it here. 

    By the universal coefficients theorem, it is sufficient to prove this result for the fields $\mathbb{F}_p$ for all primes $p$. Fixing such $p$, denote $D_t:=\dim SH^t(W;\mathbb{F}_p)$, then by Remark~\ref{rem:smith}, $D_t\geq 1$ for all $t>0$. To show the opposite inequality, fix a small interval 
    $$[t,t']\subset \R_{>0}\setminus \operatorname{Spec}(W).$$
    For all $\tau\in [t,t']$, apply (many times) the Smith inequality (\ref{eq:smith}) to the interval $[0,\tau]$. The fact that $p^k\tau$ ranges over the large interval $[p^kt,p^k t']$, and the fact that $\{C_l\}$ has bounded gap, guarantee that 
    \[
    \text{There exist $k$ and $\tau\in[t,t']$ such that $C_l=p^k\tau$, and}\quad D_t = D_\tau\leq  D_{p^k\tau=C_l} = 1.\qedhere
    \]
\end{proof}
\begin{proof}[Proof of Theorem~\ref{thm:MB_families}]
        We need to prove that any critical manifold $Y_T$ of fixed points of $\varphi^T$ (or equivalently, parametrized Reeb orbits), has the homology of a sphere $S^{2d+1}$, and in particular, is connected.
        Suppose there exists $T$ such that $H_*(Y_T)\neq H_*(S^{2d+1})$ as a graded vector space (as before, $d=d(S_T)$ is half the dimension of the $S^1$-quotient). Then, since $Y_T$ is a closed manifold, we conclude that 
        \[
         \dim H_*(Y_T)>2.
        \]
        Recall that by Lemma~\ref{lem:local_SH_non-zero},  $ H_*(Y_T;\F_2) =SH_{*+\mu_-(S_T)}^{[T-\epsilon,T+\epsilon]}(W;\F_2)$ and therefore, there is a bijection between the elements of $H_*(Y_T)$ and the bars intersecting exactly one of the ends of interval $[T-\epsilon, T+\epsilon]$ (but not both of them). On the other hand, by Proposition~\ref{prop:SH_1_dim} for $\F=\F_2$, $\dim SH_*^t(W;\F_2)=1$ for all $t$, and therefore exactly one bar must enter and one must leave the action window $[T-\epsilon, T+\epsilon]$, and therefore the number of bars intersecting the ends cannot be grater than  2.
\end{proof}

We conclude this section with a proof of Corollary \ref{cor:diagonal_barcode}, which follows the arguments from \cite[Section 4.2]{cineli2024closed}.
\begin{proof}[Proof of Corollary \ref{cor:diagonal_barcode}]
    As in the proof of Theorem~\ref{thm:MB_families} above,  Lemma~\ref{lem:local_SH_non-zero} gives a bijection between the two elements of $H_*(Y_{T};\F_2)$, lying in degrees $0,2d+1$, and classes in $SH^{[T-\epsilon,T+\epsilon]}(W;\F_2)$, which lie in degrees 
    $$\mu_-(S_{T})\quad\text{and}\quad\mu_+(T)+1.$$ 
    These classes are  in bijection with bars intersecting one of the ends of the interval $[T-\epsilon,T+\epsilon]$. Together with Proposition~\ref{prop:SH_1_dim}, this implies that there exists one bar $I^+$ starting at action $T$ and one bar $I^-$ ending at action $T$. Their degrees are either
     \begin{equation}\label{eq:bar_degrees_1}
        \deg(I^+) = \mu_+(S_{T})+1 \quad\text{and}\quad \deg(I^-)=\mu_-(S_{T})-1,
    \end{equation}
    or
     \begin{equation}\label{eq:bar_degrees_2}
        \deg(I^-) = \mu_+(S_{T}) \quad\text{and}\quad \deg(I^+)=\mu_-(S_{T}),
    \end{equation}
    depending which generator is an end, and which one is a beginning.
    Recall that Lemma~\ref{lem:pairity_from_CDPT} dictated the parity of $\mu_-(S_T)$:
    \[
     \mu_-(S_T)\equiv n-1 \mod 2\quad\text{and}\quad \mu_+(S_{T}) =  \mu_-(S_{T})+2d\equiv n-1 \mod 2.
    \]
    So, either (\ref{eq:bar_degrees_1}) holds and $\deg(I^\pm)\equiv n\mod 2$, or (\ref{eq:bar_degrees_2}) holds and $\deg(I^\pm)\equiv n-1\mod 2$.
    We will show, by induction on the action, that only the first option is possible.
    Let $\{T_i\}$ be the elements in the action spectrum in an increasing order. For $i=1$, $I_1^-$ should start at the class $p$ representing $W$, which lies in degree $n$ (and action 0). Therefore $\deg(I_1^-) =n$, which means that (\ref{eq:bar_degrees_1}) holds and we must have $\deg(I_1^+) \equiv n \mod 2$. Since $I_1^+=I_2^-$, we see that (\ref{eq:bar_degrees_1}) holds for $i=2$ as well, and we continue by induction.  
    Overall, we conclude that (\ref{eq:bar_degrees_1}) holds for all $T_i$, and since $I_i^-=I_{i-1}^+$, it implies that 
    \[
    \deg(I_i^+) = \mu_+(S_{T_i})+1>\mu_-(S_{T_i})-1=\deg(I_i^-) = \deg(I_{i-1}^+).\qedhere
    \]
\end{proof}

\section{Proofs of the main Theorems}
In this section we prove Theorems~\ref{thm:Besse} and \ref{thm:Zoll}. The following consequence of Proposition~\ref{prop:SH_1_dim} and Corollary~\ref{cor:diagonal_barcode} will be useful and is possibly of independent interest:
\begin{cor}\label{cor:no_index_overlap}
  Let $W$ as above. For all $T_1<T_2$ in $\operatorname{Spec}(\partial W)$, the interval 
    \begin{equation}\label{eq:index_overlap}
        [\mu_-(S_{T_1}),\mu_+(S_{T_1})] \quad\text{does not intersect}\quad [\mu_-(S_{T_2}),\mu_+(S_{T_2})].
    \end{equation}
\end{cor}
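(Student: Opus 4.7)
The plan is to extract a quantitative refinement of the bar-degree bookkeeping already carried out in the proof of Corollary~\ref{cor:diagonal_barcode}. Recall that Proposition~\ref{prop:SH_1_dim} together with Lemma~\ref{lem:local_SH_non-zero} force, at each $T_i\in \operatorname{Spec}(\partial W)$, the existence of \emph{exactly} one bar $I_i^+$ starting at $T_i$ and \emph{exactly} one bar $I_i^-$ ending at $T_i$. Moreover, the inductive argument in that proof, based on the parity statement of Lemma~\ref{lem:pairity_from_CDPT} and the fact that the bar emerging from the distinguished generator $p$ sits in degree $n$, established that only alternative (\ref{eq:bar_degrees_1}) occurs, i.e.
\[
\deg(I_i^-) \;=\; \mu_-(S_{T_i})-1, \qquad \deg(I_i^+) \;=\; \mu_+(S_{T_i})+1.
\]

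Now enumerate the spectrum as $T_1<T_2<\cdots$. Since exactly one bar leaves at $T_{i-1}$ and exactly one bar arrives at $T_i$, and no bar can begin or end in the open interval $(T_{i-1},T_i)$, these two bars must coincide: $I_{i-1}^+=I_i^-$. Comparing their degrees gives
\[
\mu_+(S_{T_{i-1}})+1 \;=\; \deg(I_{i-1}^+) \;=\; \deg(I_i^-) \;=\; \mu_-(S_{T_i})-1,
\]
so that $\mu_-(S_{T_i})=\mu_+(S_{T_{i-1}})+2$. Thus the index intervals of consecutive periods are not only disjoint but strictly separated by a gap of length two. For arbitrary $T_1<T_2$ in $\operatorname{Spec}(\partial W)$, telescoping this identity over the finitely many intermediate periods yields
\[
\mu_-(S_{T_2}) \;\geq\; \mu_+(S_{T_1})+2 \;>\; \mu_+(S_{T_1}),
\]
which is exactly the non-overlap claim (\ref{eq:index_overlap}).

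There is no real obstacle here: the substantive work lies in Proposition~\ref{prop:SH_1_dim} and Corollary~\ref{cor:diagonal_barcode}, and what remains is a direct consequence of the bar-chaining identity. The only subtle point is that $\mu_\pm(S_T)$ are defined as extremal values over all components of $S_T$, but this causes no trouble because Theorem~\ref{thm:MB_families} guarantees that $Y_T$ is connected, so $S_T$ consists of a single Morse--Bott component and the symbols $\mu_\pm(S_{T_i})$ unambiguously refer to its indices.
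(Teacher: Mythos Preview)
Your proof is correct and follows essentially the same route as the paper's: both use the identification $\deg(I_i^-)=\mu_-(S_{T_i})-1$ and $\deg(I_i^+)=\mu_+(S_{T_i})+1$ established in the proof of Corollary~\ref{cor:diagonal_barcode}, together with the bar-chaining $I_{i-1}^+=I_i^-$, to conclude $\mu_+(S_{T_1})+1\le \mu_-(S_{T_2})-1$. The only difference is cosmetic---you telescope the consecutive-period identity explicitly, while the paper invokes the monotonicity statement of Corollary~\ref{cor:diagonal_barcode} directly---and your remark about connectedness via Theorem~\ref{thm:MB_families} is a useful clarification that the paper leaves implicit.
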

\begin{proof}
    We will show that $\mu_+(S_{T_1})< \mu_-(S_{T_2})$. Applying Lemma~\ref{lem:local_SH_non-zero} to $S_{T_i}$, we see that 
    \[
    SH_{*+\mu_-(S_{T_i})}^{[1-\epsilon,1+\epsilon]}(W;\F_2) = H_*(S_{T_i};\F_2)\neq 0 \quad\text{for}\quad *=0, 2d+1=\dim(Y_{T_i}). 
    \]
    In particular, there are bars $I_\pm^i$ starting and ending at action $T_i$, of degrees:
    \[
    \deg(I_-^i) = \mu_-(S_{T_i})-1\quad \text{and}\quad \deg(I_+^i) = \mu_+(S_{T_i})+1.
    \]    
    Since degrees of starting bars strictly increase with the action values (Corollary~\ref{cor:diagonal_barcode}) and $T<1$, we have, 
    \[
    \mu_+(S_{T_1})+1 = \deg(I_+^1) \leq\deg (I_-^2) = \mu_-(S_{T_2}) - 1. 
    \]
    In particular, $\mu_+(S_{T_1})<\mu_-(S_{T_2})$ as required.
\end{proof}

\begin{proof}[Proof of Theorem~\ref{thm:Besse_MB}]
    Let $S_T$ be a Morse-Bott family of action $T$ and dimension $2d$
    By Lemma~\ref{lem:pairity_from_CDPT}, $\mu_(S_T)\equiv n-1 \mod 2$. We may write 
    $$
    \mu_-(S_T)=n-1+ 2k \quad \text{for some}\quad  k\in \N.
    $$
    In this case,  $\mu_+(S_T)=n-1 +2(k+d)$. We claim that 
    \[
    c_k(W)=\cdots=c_{k+d}(W)=T.
    \]
    Indeed, by the graded spectrality property of $c_k$, it is enough to show that there are no generators of action $T'\neq T$ in the relevant degrees: $n-1+2k, \cdots, n-1+2(k+d)$. But this follows from Corollary~\ref{cor:no_index_overlap}.
\end{proof}
\begin{proof}[Proof of Theorem~\ref{thm:Besse}]
    Apply Theorem~\ref{thm:Besse_MB} to the top dimensional family $S_1$, with $d=n-1.$
\end{proof}

The proof of Theorem~\ref{thm:Zoll} requires the following lemma.
\begin{lemma}\label{lem:c_1=T}
    If $W$ is Besse of minimal common period $1$, and  $c_1(W)\in \N$ is a common period, then $\hat\mu(S_1)=2n$ and $c_1(W)=1$.
\end{lemma}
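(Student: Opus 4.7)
The plan is to use graded spectrality to identify $c_1(W)$ with the action of a Reeb orbit of specific index, then combine dynamical convexity (Proposition~\ref{prop:Besse_is_dyn_conv}) with the maximal degeneracy of the time-$1$ flow and the lower bound of Corollary~\ref{cor:RS_geq_2n} to force $c_1(W) = 1$ and $\hat\mu(S_1) = 2n$ simultaneously.

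First I would apply graded spectrality to $c_1$: there exists a connected component $S$ of a Morse--Bott family $S_T$ with $T = c_1(W)$ and $n+1 \in [\mu_-(S), \mu_+(S)]$. By hypothesis $T = m$ is a positive integer and a common period, so $\varphi^m = \mathrm{id}$ and $Y_m = \partial W$. Since $\partial W$ is connected, the family $S_m$ has a single connected component, so $S = S_m$ and $\dim S_m = 2(n-1)$.

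Next, since $\varphi^1 = \mathrm{id}$, the linearized flow $\Phi_1 \in \widetilde{\Sp}(2n-2)$ representing $S_1$ satisfies $\Phi_1(1) = \mathrm{id}$, so it is maximally degenerate. The linearized flow representing $S_m$ is then $\Phi_1^m$, and the formula recalled in Section~\ref{sec:CZ} for maximally degenerate paths gives $\indRS(\Phi_1^m) = m \indRS(\Phi_1) = m \hat\mu(S_1)$. Together with $d(S_m) = n-1$, this yields $\mu_-(S_m) = m \hat\mu(S_1) - (n-1)$.

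Finally, combining the upper bound $\mu_-(S_m) \leq n+1$ from graded spectrality with the lower bound $\mu_-(S_m) \geq n+1$ from dynamical convexity gives $\mu_-(S_m) = n+1$, hence $m \hat\mu(S_1) = 2n$. Together with $\hat\mu(S_1) \geq 2n$ from Corollary~\ref{cor:RS_geq_2n} and $m \geq 1$, this forces $m = 1$ and $\hat\mu(S_1) = 2n$, proving both claims. I do not anticipate a serious obstacle; the main conceptual observation is that $T = c_1(W)$ being a positive integer common period causes $S_T$ to coincide with the top-dimensional family $S_1$ (as a space), so the indices can be computed explicitly via the iterate structure of the maximally degenerate $\Phi_1$.
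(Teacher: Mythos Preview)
Your proof is correct and follows essentially the same approach as the paper: both use graded spectrality to locate $c_1(W)$ at some integer period $m$, compute $\mu_-(S_m)=m\hat\mu(S_1)-(n-1)$ via maximal degeneracy of $\Phi_1$, and combine the lower bound $\hat\mu(S_1)\geq 2n$ from Corollary~\ref{cor:RS_geq_2n} with the grading constraint at degree $n+1$. Your organization is slightly more streamlined, pinning down $\mu_-(S_m)=n+1$ directly via dynamical convexity and deducing both conclusions from $m\hat\mu(S_1)=2n$ at once, whereas the paper treats the two conclusions in separate contradiction arguments; but the mathematical content is the same.
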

\begin{proof}
    By Corollary~\ref{cor:RS_geq_2n} $\hat \mu(S_1)\geq 2n$. If $\hat \mu(S_1)>2n$ then for every $k$, $\hat\mu(S_{k})= k\cdot \hat\mu(S_1)>2n$, and 
    $$\mu_-(S_{k})=\indRS (S_{k})- n+1 = \hat\mu(S_{k})-n+1>n+1,$$
    for all $k\in \N$. Hence, there are no generators of action in $\N$ in grading $n+1$, which contradicts $c_1(W)\in \N$.

    To show that $c_1(W)=1$, we notice that 
    $$\mu_-(S_{k})\geq \hat\mu(S_{k})-n+1 = k\cdot \hat\mu(S_1) - n+1 = 2nk-n+1 > n+1$$
    whenever $k>1$, so there are no generators of action $k$ in grading $n+1$.
\end{proof}
\begin{proof}[Proof of Theorem~\ref{thm:Zoll}]
    Let $W$ be a Besse star-shaped domain of minimal common period 1 and assume 
    \[
    c_1(W)=\cdots =c_n(W).
    \]
    Theorem 1.5 from \cite{ginzburg2021spectral} states that 
    \[\text{if $c_i(W)=\cdots=c_{i+n-1}(W)$ then $W$ is Besse}\]
    and $c_i(W)$ is a common period of its closed Reeb orbits. Therefore, in our case we conclude that $W$ is Besse and that $c_1(W)$ is a common period for its closed Reeb orbits. In particular,   
    $c_1(W)\in \N$. By Lemma~\ref{lem:c_1=T}, $c_1(W)=1$ and $\hat\mu(S_1)=2n$. Suppose for the sake of contradiction that $W$ is not Zoll. Then there exists Morse--Bott family $S_T$ such that 
    \[
    T\leq 1/2\quad\text{and}\quad \hat\mu(S_T) = T\cdot \hat\mu(S_1) \leq \frac{1}{2}\cdot 2n = n.
    \]
    In particular $\mu_-(S_T)\leq \hat\mu(S_T)+n-1 = 2n-1$. On the other hand, by Proposition~\ref{prop:Besse_is_dyn_conv}, Besse star-shaped domains are dynamincally convex and hence $\mu_-(S_T)\geq n+1$. Overall,
    \[
    \mu_-(S_T)\in [n+1, 2n-1]\subset [n+1, 2n+n-1] = [\mu_-(S_1), \mu_+(S_1)]
    \]
    which contradicts Corollary~\ref{cor:no_index_overlap}.
\end{proof}

\newpage
\appendix
\section{SH of a Morse--Bott family}\label{app:MB}
This section is dedicated to a sketch of a proof of Lemma~\ref{lem:local_SH_non-zero}. It is a special (and simpler) case of arguments from \cite{bourgeois2009symplectic}. 
\begin{lemma}[Lemma \ref{lem:local_SH_non-zero}]
    Let $W\subset \R^{2n}$ be a star-shaped domain whose Reeb flow is Besse, and let $S_T$ be a  Morse--Bott family of Reeb orbits, then 
    \[SH_{*+\mu_-(S_T)}^{[T-\epsilon,T+\epsilon]}(W;\F_2) = H_*(Y_T;\F_2). 
    \] 
    In particular, 
    $$SH_{\mu_-(S_T)}^{[T-\epsilon,T+\epsilon]}(W;\F_2)\neq 0 \quad\text{and}\quad SH_{\mu_+(S_T)+1}^{[T-\epsilon,T+\epsilon]}(W;\F_2)\neq 0.$$
\end{lemma}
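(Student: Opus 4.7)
The plan is to perturb the Besse contact form by a small Morse-type perturbation supported near the critical manifold $Y_T$, and to identify the resulting local Floer complex in the action window $[T-\epsilon,T+\epsilon]$ with the Morse complex of $Y_T$. First I would fix an $S^1$-invariant Morse-Bott function $f$ on $Y_T$ whose critical locus consists of finitely many circle orbits (equivalently, pull back a Morse function $\bar f$ on the orbifold quotient $S_T$). Extending $f$ to a smooth function on $\partial W$ supported in a tubular neighborhood of $Y_T$ yields a perturbed contact form $\alpha_\delta=(1+\delta f)\alpha$; for $\delta>0$ small, the Reeb orbits of $\alpha_\delta$ with action in $[T-\epsilon,T+\epsilon]$ are in bijection with critical $S^1$-orbits of $f$ on $Y_T$, and all other periodic orbits have action outside the window.

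Next I would compute the Conley--Zehnder indices of these perturbed orbits. By equation (4.2) of \cite{chaidez2022contact}, already used in Lemma~\ref{lem:pairity_from_CDPT}, a nondegenerate orbit arising from a critical point of $\bar f$ on $S_T$ of Morse index $i$ has
\[
\indCZ = \mu_-(S_T)+i,
\]
so it sits in the grading range $[\mu_-(S_T),\mu_+(S_T)]$. In the SH chain complex each such orbit contributes a pair $(\check x,\hat x)$ in consecutive degrees, which together encode the two cells of the $S^1$-fiber of $Y_T\to S_T$ over the given critical point. Summing over all critical points of $\bar f$, the total count of generators in each degree matches the $\F_2$ cell counts of $Y_T$, up to the grading shift by $\mu_-(S_T)$.

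Then I would identify the local Floer differential with the Morse differential of $Y_T$. Using the cascade description of Morse--Bott Floer homology from \cite{bourgeois2009symplectic}, for $\delta$ and $\epsilon$ sufficiently small the Floer trajectories with endpoints in the action window decompose into short pieces that stay in a neighborhood of $Y_T$; the resulting count reproduces the Morse complex of $f$ on $Y_T$ with $\F_2$ coefficients, with the $S^1$-fiber structure encoded by the paired generators $(\check x,\hat x)$. This yields the isomorphism
\[
SH^{[T-\epsilon,T+\epsilon]}_{*+\mu_-(S_T)}(W;\F_2)\cong H_*(Y_T;\F_2).
\]
The "in particular" part then follows: since $Y_T$ is a closed manifold of dimension $2d+1$ (with $2d=\dim S_T$), both $H_0(Y_T;\F_2)$ and $H_{2d+1}(Y_T;\F_2)$ are nonzero, which corresponds to nonvanishing in SH-degrees $\mu_-(S_T)$ and $\mu_-(S_T)+2d+1=\mu_+(S_T)+1$.

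The main obstacle I expect is the third step: making the Bourgeois--Oancea cascade identification of the local Floer differential with the Morse differential of $f$ fully rigorous in this setting. One has to verify that, for $\delta$ sufficiently small relative to $\epsilon$, no "long" Floer trajectories contribute to the action window and that the transversality and gluing arguments required to recover the Morse complex go through. Restricting to $\F_2$ coefficients is what sidesteps the orientation issues that would otherwise arise, which is consistent with the remark following Theorem~\ref{thm:MB_families} about the extension to $\Z$ coefficients.
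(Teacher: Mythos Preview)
Your outline is broadly correct and would work, but it takes a different route from the paper. You perturb the contact form by an $S^1$-invariant function pulled back from the orbifold $S_T$, obtain one nondegenerate Reeb orbit per critical point of $\bar f$, and then rely on the $(\check x,\hat x)$ pairs together with the Bourgeois--Oancea cascade description to recover $H_*(Y_T;\F_2)$. The paper instead works directly at the Hamiltonian level: it picks an honest Morse function $f$ on $Y_T$ (not $S^1$-invariant), builds a \emph{time-dependent} Hamiltonian $H_\delta(t,z)=h(\rho)+\delta\,\beta(z)\,f(\varphi^{-Tt}y)$, and shows that its $1$-periodic orbits of action $T$ are in bijection with $\operatorname{Crit}(f|_{Y_T})$, with $\indCZ(\gamma_p)=\mu_-(S_T)+\operatorname{ind}_{\mathrm{Morse}}(f,p)$. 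The differential is identified with the Morse differential of $f$ on $Y_T$ in two elementary steps: an energy-splitting argument (using that $J$ preserves the $V_T\oplus V_N$ decomposition) forces every Floer cylinder to lie in $I\times Y_T$, and the change of variables $u\mapsto \varphi^{-Tt}u$ converts the Floer equation for $H_\delta$ into the Floer equation for the $C^2$-small autonomous Hamiltonian $\delta f$, whose solutions are Morse flow lines.

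The upshot is that the paper bypasses both the $(\check x,\hat x)$ mechanism and the cascade machinery; what you identify as the ``main obstacle'' is handled by a direct confinement lemma plus a symmetry trick rather than by invoking \cite{bourgeois2009symplectic}. Your approach has the advantage of staying within the standard SH formalism described in Section~\ref{subsec:SH}, but the price is exactly the step you flagged: making the cascade identification of the differential with the Morse differential of $Y_T$ (not just $S_T\times S^1$) precise. One small caution: your sentence about ``the total count of generators in each degree matches the $\F_2$ cell counts of $Y_T$'' is not quite right at the chain level---the generators match the cells of the associated $S^1$-bundle, and it is only after taking the differential into account that you recover $H_*(Y_T)$.
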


In what follows, $R$ is the Reeb vector field on $\partial W$ and $\varphi^t$ is the Besse Reeb flow. We assume without loss of generality that the minimal common period is 1, $\varphi^1=\operatorname{id}$. In particular, the flow is Morse--Bott. Fix a period $T$. As before, we denote by $Y_T\subset \partial W$ the image of this Morse--Bott family (for example, if $T=1$ then $Y_T=\partial W$). 
 We will show that for a certain nondegenerate Hamiltonian, the Floer chain complex in a small action window around $T$ is isomorphic to the Morse complex of some Morse function on $Y_T$. The claim for the symplectic homology chain complex is obtained by a standard direct limit argument.

We collect here some assumptions and notations that will be of use.
\begin{setup}\label{setup:J}
    \begin{enumerate}
        \item Let $\rho$ be the Liouville coordinate, so 
        \[
        \R^{2n}\setminus \{0\}\ \cong \ (0,\infty)\times \partial W,\qquad \rho\cdot y\mapsto (\rho,y).
        \]      
        \item Let $I\subset (0,\infty)$ be a small open interval containing 1. We extend the flow $\varphi^t$ to $I\times \partial W$, so that in forms an $S^1$-action on that neighborhood.
        \item Let $J$ be  an $\omega_0$-compatible almost complex structure on $\R^{2n}$ that, near $\partial W$, is invariant under the $S^1$-action generated by the flow\footnote{Such an almost complex structure can be constructed by averaging an arbitrary one  under the $S^1$-action.}, and let $g_J$ be the Riemannian metric that is compatible with $(\omega_0, J)$.
        \item Fix a tubular neighborhood $\widehat \cN$ of $I\times Y_T$ in $\R^{2n}$ that is isomorphic to a $g_J$-normal disk bundle of $I\times Y_T$. As a consequence: 
        \begin{itemize}
            \item There is a projection $\pi:\widehat \cN\rightarrow I\times Y_T$, coming from the normal bundle structure.
            \item The tangent bundle of $\widehat \cN$ is isomorphic to the tangent bundle of the normal bundle and therefore splits as
        \[
        T_z\widehat \cN \cong V_N\oplus T(I\times Y_T) =: V_N\oplus V_T,
        \]
        where $V_N=\ker d\pi$ is isomorphic to the $g_J$-normal sub-bundle to $Y_T$ at $\pi(z)$. 
        \end{itemize}       
        \item Since the decomposition $T\widehat \cN \cong V_N\oplus V_T$ is $g_J$-orthogonal and $V_T$ is symplectic (as $Y_T\subset \partial W$ is a contact submanifold), then $J$ respects the splitting: $JV_T=V_T$, $JV_N=V_N$.
    \end{enumerate}
\end{setup}

Let $f:Y_T\rightarrow \R$ be a Morse function on $Y_T$ and extend it to a Morse function of $I\times Y_T$ by adding a  function of the form $(\rho-1)^2$. So the critical points still lie on $Y_T$ and $\frac{d^2}{d\rho^2}f>0$. If necessary, we perturb this Morse function to guarantee that $(f,g_J|_{Y_T})$ is Morse--Smale\footnote{This is possible due to the Kupka-Smale Theorem, cf. \cite[Theorem 6.6]{banyaga2004lectures}.}. Next, we extend this function to $\widehat\cN$ by composing with the projection $\pi$, and still denote it by $f$. So:
\[
df|_{V_N} = df|_{\ker d\pi}=0.
\]

Let $\beta:\R^{2n}\rightarrow \R$ be a bump function supported in $\widehat\cN$ that equals 1 on some smaller neighborhood $\cN\Subset \widehat\cN$ of $Y_T$. For any $\delta>0$ we define a time-dependent Hamiltonian by
\[
H_{\delta}(t,z) := h(\rho)+ \delta\cdot \beta(z)\cdot f\big(\varphi^{-T\cdot t}(y)\big),\quad \text{where}\quad z=(\rho,y),
\]
where $h(\rho)\equiv T\cdot \rho$ on $\cN$, and is strictly smaller (respectively, bigger) for smaller (respectively, bigger) $\rho\notin I$ .
The Hamiltonian vector field of this perturbation is
\[
X_{H_{\delta}} = T\cdot R + \delta\cdot  f\big(\varphi^{-T\cdot t}(y)\big)\cdot  X_\beta + \delta\cdot \beta(z) \cdot d\varphi^{T\cdot t} X_f\circ \varphi^{-T\cdot t},
\]
and its restriction to $Y_T$ is 
\[
X_{H_{\delta}}|_{Y_T} = T\cdot R +  \delta\cdot d\varphi^{T\cdot t} X_f\circ \varphi^{-T\cdot t}.
\]
\begin{lemma}
    For small enough $\delta>0$, the Hamiltonian $H_\delta$ has only nondegenerate 1-periodic orbits of action $T$, they all lie in $Y_T$ and are in bijection with critical points of $f|_{I\times Y_T}$:
    \[
    p\in \operatorname{Crit} (f|_{I\times Y_T})\quad \leftrightarrow \quad \gamma_p(t):= \varphi^{T\cdot t}(p)\subset Y_T 
    \]
     and 
    \[
    \indCZ(\gamma_p) = \indRS(S) -d + \ind_{\operatorname{Morse}}(f,p) = \mu_-(S)+ \ind_{\operatorname{Morse}}(f,p).
    \]
    where $S$ is the connected component of $S_T$ containing $\gamma_p$ and $d=\frac{1}{2}\dim (S)$
\end{lemma}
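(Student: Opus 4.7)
The plan is to proceed in four steps: identify the orbits $\gamma_p$ explicitly (step 1), show they are the only 1-periodic orbits near $Y_T$ of action close to $T$ (step 2), prove non-degeneracy (step 3), and compute the Conley--Zehnder index (step 4). For step 1, I substitute $\gamma_p(t)=\varphi^{Tt}(p)$ into Hamilton's equation and use the formula $X_{H_\delta}|_{Y_T}=TR+\delta\,d\varphi^{Tt}\,X_f\circ\varphi^{-Tt}$ from the text. Since $\dot\gamma_p=TR(\gamma_p)$, the equation reduces to $d\varphi^{Tt}(X_f(p))=0$ for all $t$, i.e.\ $X_f(p)=0$. Because $f$ on $\widehat\cN$ is the pullback $f\circ\pi$ and $\pi$ collapses the normal directions $\xi_S^\perp$, the vector $X_f(p)$ is determined by $df|_{T_p(I\times Y_T)}$, and $X_f(p)=0$ is equivalent to $p$ being a critical point of $f|_{I\times Y_T}$; since the added term $(\rho-1)^2$ has its unique critical value at $\rho=1$, these coincide with the critical points of $f|_{Y_T}$.

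For step 2, outside $\widehat\cN$ the perturbation vanishes and $H_\delta=h(\rho)$ has no 1-periodic orbits of action close to $T$ by admissibility of $h$. Inside $\widehat\cN$, the 1-periodic orbits of the unperturbed $X_{H_0}$ of action $T$ form exactly $Y_T$, and the unperturbed linearized return map is non-degenerate transverse to $Y_T$: in the $\xi_S^\perp$ directions by the Morse--Bott condition, and in the radial direction by the convex part of $h$ (together with the $(\rho-1)^2$ term). A standard implicit function argument then forces persisting 1-periodic orbits of small enough action window to lie on $Y_T$ (up to $O(\delta)$), with their positions selected by $X_f=0$. For step 3, I block-decompose the linearized flow $d\Phi^1_{H_\delta}$ along $\gamma_p$ via the symplectic splitting
\[
T_{\gamma_p(t)}\R^{2n}\cong (\R\partial_\rho\oplus \R R)\oplus \xi_S\oplus \xi_S^\perp.
\]
On $\xi_S^\perp$, the flow is an $O(\delta^2)$ perturbation of the non-degenerate $d\varphi^T|_{\xi_S^\perp}$ (since $df$ vanishes on $\xi_S^\perp$ to first order), hence non-degenerate. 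On the $(2d+2)$-dimensional tangential block, the unperturbed flow is the identity, and the $O(\delta)$ perturbation — coming from the Hessian of $f|_{I\times Y_T}$ twisted by $\varphi^{-Tt}$ — is non-degenerate by the Morse property of $f|_{Y_T}$, the strict convexity of $(\rho-1)^2$, and the time-dependence, which breaks the Reeb-direction degeneracy.

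For step 4, the CZ index is additive across the symplectic blocks. The $\xi_S^\perp$ block contributes $\indRS(S)$: the RS index on all of $\xi$ equals that on $\xi_S^\perp$ because the $\xi_S$ part of the linearized Reeb flow is the identity and contributes $0$. The tangential block is a Morse--Bott-type perturbation of the identity path; by the standard Morse--Bott CZ index formula (see \cite{bourgeois2009symplectic}), its contribution is $\ind_{\operatorname{Morse}}(f|_{Y_T},p)-d$. Summing gives $\indCZ(\gamma_p)=\indRS(S)+\ind_{\operatorname{Morse}}(f|_{Y_T},p)-d=\mu_-(S)+\ind_{\operatorname{Morse}}(f|_{Y_T},p)$, as claimed.

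The main obstacle is the CZ index computation for the tangential block in step 4. Naively, a small positive-definite Hessian perturbation of the identity on a $(2d+2)$-dimensional symplectic space would shift the CZ index by $-(d+1)$, but the correct shift is $-d$. The missing $+1$ comes from the interplay between the Reeb direction — where the unperturbed flow is identity and the perturbation enters only through the time-dependence $\varphi^{-Tt}$ rather than through a genuine Hessian — and the contact-tangent directions $\xi_S$. Handling this correctly, either through a direct computation in a suitable trivialization of $\xi|_{\gamma_p}$ or by invoking the Morse--Bott formalism of \cite{bourgeois2009symplectic}, is the technical heart of the lemma; working over $\F_2$ has the collateral benefit of bypassing the associated orientation analysis.
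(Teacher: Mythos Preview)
Your block-decomposition strategy is a legitimate route to this lemma, but step 4 contains a genuine error that you do not resolve. You assert that ``the $\xi_S$ part of the linearized Reeb flow is the identity and contributes $0$'' to the index. The \emph{endpoint} $d\varphi^T|_{\xi_S}$ is indeed the identity, but the \emph{path} $t\mapsto d\varphi^{Tt}|_{\xi_S}$ is a nontrivial loop based at $\Id$, and its Robbin--Salamon index in the disk trivialization is generally nonzero --- for instance on the round sphere one has $\xi_S=\xi$ and this loop carries the entire $\indRS(S_1)$. Consequently the $\xi_S^\perp$ block does \emph{not} contribute $\indRS(S)$ as you claim, and your tangential block is not a perturbation of the constant identity path but of the loop $\Id_{(\rho,R)}\oplus d\varphi^{Tt}|_{\xi_S}$. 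The $-d$ versus $-(d{+}1)$ confusion you flag in the last paragraph is a symptom of exactly this: the missing loop contribution and the off-by-one are entangled, and deferring to \cite{bourgeois2009symplectic} without pinning down which statement does the work leaves the index computation incomplete.

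The paper's proof sidesteps the block decomposition by exploiting the reason $H_\delta$ is defined with the twist $f\circ\varphi^{-Tt}$: on $\cN$ this is precisely the Hamiltonian generating the \emph{composed} flow, so $\varphi^t_{H_\delta}=\varphi^{Tt}\circ\varphi^t_{\delta f}$ exactly. The bijection of orbits is then immediate (1-periodic orbits of $H_\delta$ $\leftrightarrow$ fixed points of $\varphi^1_{\delta f}$ $\leftrightarrow$ $\operatorname{Crit}(f)$); non-degeneracy follows from the product $d\varphi^T\cdot d\varphi^1_{\delta f}$, the first factor being non-degenerate on $V_N$ and the second on $V_T$; and the index comes from the composition property of $\indRS$ applied to $d\varphi^{Tt}\cdot d\varphi^t_{\delta f}$, where the second factor genuinely is a small perturbation of the \emph{constant} identity path with a single crossing at $t=0$ and crossing form $-\operatorname{Hess}(\delta f)$. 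This factorization is the idea you are missing; it replaces your steps 2--4 in one stroke.
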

\begin{proof}
    The proof is similar to  \cite[Lemma 4.8]{chaidez2022contact}. 
    Outside of $\cN$, $H_\delta$ has no action-$T$ orbits, when $\delta$ is small enough. In $\cN$, the $H_\delta$ is simply the composition of $T\cdot \rho$ and $\delta\cdot f$. Therefore,
    \[
    \varphi^t_{H_\delta} = \varphi^{T\cdot t}\circ \varphi_{\delta f}^t.
    \]
    When $\delta$ is small, $\varphi^t_{\delta f}$ has no non-constant periodic orbits of period $T$ and hence $T$-periodic orbits of $H_\delta$ are in bijection with critical points of $f$ on $Y_T$. The linearized flow is given by the product $d\varphi^{T\cdot t}\circ \varphi_{\delta f}^t \cdot d\varphi_{\delta f}^t$.
    To see that $H_\delta$ is nondegenerate (for small $\delta$), recall that the Reeb flow on $\partial W$ is Morse--Bott, and therefore 
    \[
    d\varphi^{T}|_{V_T}=\operatorname{id},\quad \text{and}\quad  d\varphi^{T}|_{V_N} \quad \text{is nondegenerate.}
    \]
    On the other hand, 
     \[
    d\varphi_{\delta f}^T|_{V_N}=\operatorname{id},\quad \text{and}\quad  d\varphi_{\delta f}^T|_{V_T} \quad \text{is nondegenerate.}
    \]
    Finally, by the composition property of the Robbin--Salamon index \cite{g2014}, the Conley--Zehnder index of $\gamma_p$ is the sum of Robbin--Salamon indices of the two flows, and since $d\varphi_{\delta f}^t|_{V_T}$ crosses the Maslov cycle only at the identity, where the crossing form is minus the Hessian of $\delta f$, $\indRS(d\varphi_{\delta f}^t) = \ind_{\operatorname{Morse}}(f,p)-d$.
\end{proof}

The above lemma shows that the generators of the chain complex $CF^{[T-\epsilon,T+\epsilon]}(H_\delta)$ are the same as those of $CM(f)$ up to a grading shift by $\mu_-(S_T)$. Next, we need to prove that the differentials of these chain complexes agree. Since we work over $\F_2$ coefficients it is sufficient to construct a bijection between the relevant moduli spaces.

\begin{lemma}
    Let $J$ as in Setup~\ref{setup:J} and let $p,q\in Y_T$ be critical points of $f$. For small enough $\delta$, any Floer solution $u$ from $\gamma_p$ to $\gamma_q$ must be contained in $I\times Y_T\subset \R\times \partial W$.
\end{lemma}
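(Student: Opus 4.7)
My plan is to argue by contradiction and Gromov--Floer compactness, in the spirit of the Morse--Bott reduction of \cite{bourgeois2009symplectic}. Assume for contradiction that there is a sequence $\delta_n\to 0^+$ and Floer cylinders $u_n$ from $\gamma_p$ to $\gamma_q$ with respect to $(H_{\delta_n},J)$, each of which escapes $I\times Y_T$. Pick $(s_n,t_n)\in\R\times S^1$ with $d_{g_J}(u_n(s_n,t_n),\,I\times Y_T)\geq c>0$ uniformly in $n$, and reparametrize via $\tilde u_n(s,t):=u_n(s+s_n,t)$.

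The first step is an energy estimate. Since the action functional is differentiable in $\delta$ at $\delta=0$ with $\partial_\delta \cA_{H_\delta}(\gamma)|_{\delta=0}=-\int_0^1 f(\varphi^{-Tt}\gamma(t))\,dt$, applied to $\gamma=\gamma_p,\gamma_q$ this gives $-f(p),-f(q)$ respectively. Hence $\cA_{H_{\delta_n}}(\gamma_p)-\cA_{H_{\delta_n}}(\gamma_q)=\delta_n(f(q)-f(p))+O(\delta_n^2)$, so the Floer energy satisfies $E(u_n)=O(\delta_n)\to 0$. Combined with the standard maximum-principle $C^0$-bound (valid since $H_{\delta_n}$ is Liouville-admissible and $J$ is Liouville-compatible outside a compact set) and the exactness of $\R^{2n}$ (ruling out sphere bubbling), Gromov--Floer compactness extracts a $C^\infty_{\mathrm{loc}}$-convergent subsequence $\tilde u_n\to u_\infty$ with $u_\infty$ a finite-energy Floer cylinder for $(H_0,J)$.

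Since $E(u_\infty)=0$, $\partial_s u_\infty\equiv 0$ and $u_\infty(s,t)=\gamma(t)$ for some $1$-periodic orbit $\gamma$ of $H_0=h(\rho)$. By construction $\gamma$ passes through a point $z_\infty$ at distance at least $c$ from $Y_T$. On the linear region $\cN$ (where $h(\rho)=T\rho$), the Hamiltonian vector field is proportional to the Reeb field and nowhere vanishing, so there are no constant orbits and all $1$-periodic orbits live on $\{\rho=1\}$, that is, in $Y_T$. Hence $z_\infty\notin\cN$. But the energy bound together with discreteness of the Reeb action spectrum (Remark~\ref{rem:finiteness_Besse}) forces any non-trivial $1$-periodic orbit of $H_0$ outside $\cN$ to lie at an action value bounded away from $T$, which is incompatible with $\gamma$ arising as a $C^\infty_{\mathrm{loc}}$-limit of Floer cylinders whose asymptotes have action $\to T$. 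This gives the desired contradiction.

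The main obstacle is making the final exclusion of limits outside $\cN$ precise: depending on the exact behavior of $h$ outside $\cN$ (in particular whether it has critical points inside $W$), one may need a complementary argument using the maximum principle to keep $\rho\circ u_n$ uniformly close to $1$, or an $L^2$ length-versus-energy estimate to prevent $u_n$ from wandering far from its asymptote in $Y_T$. Granted the lemma, a parallel parametric argument identifies the Floer moduli space in the window $[T-\epsilon,T+\epsilon]$ with gradient flow lines of $f|_{Y_T}$ for small $\delta$, reducing the filtered Floer complex to $CM^*(f|_{Y_T})$ up to a grading shift by $\mu_-(S_T)$; passing to the direct limit in the admissibility class then yields Lemma~\ref{lem:local_SH_non-zero}.
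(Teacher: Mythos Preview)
Your compactness argument has a genuine gap: it cannot deliver the \emph{exact} containment in $I\times Y_T$ that the lemma asserts, only containment in arbitrarily small neighborhoods. From ``$u_n$ escapes $I\times Y_T$'' you only get $d_n:=\sup_{s,t}d_{g_J}(u_n(s,t),I\times Y_T)>0$; there is no reason for a uniform lower bound $c>0$. If $d_n\to 0$ along a subsequence, the reparametrized limit $u_\infty$ lands in $I\times Y_T$ and you obtain no contradiction. So what your argument actually proves (once the exclusion of limits outside $\cN$ is made precise, as you yourself flag) is that for small $\delta$ every Floer cylinder has image in any prescribed neighborhood $\cN$ of $I\times Y_T$---which is exactly the first, ``standard'' step of the paper's proof, not the conclusion.

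The paper then closes the gap with a direct energy-splitting trick that no soft compactness argument can replace. Given $u\subset\cN$, the projection $u_T:=\pi\circ u$ is itself a Floer cylinder for the same pair $(H_\delta,J)$ (because $J$ preserves the $g_J$-orthogonal splitting $V_T\oplus V_N$ and $X_{H_\delta}\in V_T$ on $\cN$), with the same asymptotes $\gamma_p,\gamma_q$. The energy decomposes as $E(u)=E(u_T)+E_N$ with $E_N\ge 0$, while the action--energy identity forces $E(u)=\cA(\gamma_p)-\cA(\gamma_q)=E(u_T)$; hence $E_N=0$. Since $u$ is $J$-holomorphic in the $V_N$ directions, vanishing normal energy means $u$ is constant along the fibers of $\pi$, so $u=u_T\subset I\times Y_T$. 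This is precisely what upgrades ``close to $I\times Y_T$'' to ``contained in $I\times Y_T$''.
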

\begin{proof}
    The fact that $\operatorname{image}(u)\subset \cN$ for small enough $\delta$ is standard and can be either seen from compactness or from arguments as in \cite[Proposition 3.2.]{hein2012conley}. It is also possible to carry the arguments below for $u^{-1}(\cN)$ instead of $u$, and deduce that $u\subset \cN$.
    
    Denote by $u_T=\pi\circ u$ the projection of $u$ to $I\times Y_T$. Note that $u_T$ is also a solution of the same Floer equation since $J$ respects the splitting $V_T\oplus V_N$ and commutes with  $d\pi$:
    \begin{align*}
    \partial_s u_T - J(\partial_t u_T -X_{H_\delta})&= d\pi\partial_s u - J(d\pi \partial_t u -X_{H_\delta})\\
    &= d\pi\partial_s u - J(d\pi \partial_t u -d\pi X_{H_\delta})\\
    &=d\pi\Big(\partial_s u - J(\partial_t u -X_{H_\delta})\Big) = d\pi(0).
    \end{align*}
    Here we used the fact that $X_{H_\delta}\in T(I\times Y_T)$ and hence $d\pi$ is the identity on it. Consider the energy of $u$, it splits as 
    \[
    E(u) = \int_{\R\times S^1} \|\partial_s u\|_J^2 \ ds\ dt = \int_{\R\times S^1} \|d\pi\partial_s u\|_J^2+ \|(\operatorname{Id}-d\pi)\partial_s u\|_J^2 \ ds\ dt =: E(u_T)+ E_N.
    \]
    However, by the energy identity of Floer solutions, since both $u$ and $u_T$ solve the same equation and have the same ends, their energies coincide: 
    \[
     E(u_T)+E_N = E(u)=\cA(\gamma_p) - \cA(\gamma_q) =E(u_T)\quad\text{which implies}\quad E_N=0.
    \]
    In particular $\partial_s u$ has no components in $\ker d\pi$.
    Note that $u$ is $J$-holomorphic in the $V_N$ directions: 
    $$(\operatorname{Id}-d\pi)\big(\partial_s u +J\partial_t u\big)=0.$$
    Therefore $\partial_t u$ has no components in $\ker d\pi$ as well and $u$ is constant in the normal directions. Since the ends lie in $Y_T$, we conclude that  $\operatorname{image}(u)\subset I\times Y_T$.
\end{proof}

\begin{lemma}
     Let $J$ as in Setup~\ref{setup:J} and let $p,q$ be critical points of $f$ of index difference $\leq 1$. For small enough $\delta$, there is a bijection between Floer solution $u\subset I\times Y_T$ from $\gamma_p$ to $\gamma_q$ and Morse negative-gradient flow lines of $\delta f$ from $p$ to $q$.
\end{lemma}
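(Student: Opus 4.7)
The plan is to conjugate away the Reeb part of the Hamiltonian vector field by means of the $S^1$-action, reducing the Floer problem to a Floer problem for a $C^2$-small time-independent Hamiltonian, where Floer's classical result identifies the low-index moduli spaces with Morse trajectories.

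Concretely, since $u$ lies in $I\times Y_T$ where the Besse flow defines an $S^1$-action, define $v(s,t):=\varphi^{-T\cdot t}(u(s,t))$. Using that $\beta\equiv 1$ on the image of $u$ and that $J$ is invariant under $d\varphi^{Tt}$ (by Setup \ref{setup:J}(3)), a direct computation using the formula for $X_{H_\delta}$ given just before the preceding lemma shows that the Floer equation
\[
\partial_s u + J(u)\bigl(\partial_t u - X_{H_\delta}(u,t)\bigr)=0
\]
is equivalent, under $u(s,t)=\varphi^{T t}(v(s,t))$, to the equation
\[
\partial_s v + J(v)\bigl(\partial_t v - \delta X_f(v)\bigr)=0
\]
for $v:\R\times S^1\to I\times Y_T$, with $J$ and $g_J$ restricted to $I\times Y_T$. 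The 1-periodic orbits $\gamma_p(t)=\varphi^{Tt}(p)$ are transformed into the constant cylinders $v\equiv p$, and the correspondence $u\leftrightarrow v$ is manifestly bijective on cylinders with asymptotics in $\{\gamma_p,\gamma_q\}$.

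Now I would apply Floer's original theorem for $C^2$-small time-independent Hamiltonians on the closed base $Y_T\times I$ (see e.g.\ Salamon--Zehnder or Hofer--Salamon): for sufficiently small $\delta>0$, every finite-energy solution $v$ of the above equation with constant asymptotics at non-degenerate critical points of $f$ of index difference at most $1$ is in fact $t$-independent, and therefore satisfies $\frac{dv}{ds}=-\delta\,\nabla_{g_J}f(v)$. The Morse--Smale assumption on $(f,g_J|_{Y_T})$, combined with the decomposition $T(I\times Y_T)=TI\oplus TY_T$ in which $f$ has a quadratic term in $\rho$, ensures that the resulting Morse moduli spaces are cut out transversely and, up to reparametrization in $s$, agree with the negative gradient flow lines of $\delta f$ from $p$ to $q$. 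Conversely, every such gradient flow line gives an honest Floer cylinder, yielding the desired bijection.

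The main technical obstacle is the time-independence (``$\partial_t v\equiv 0$'') step: one needs to rule out genuinely $t$-dependent Floer cylinders of small energy in the adiabatic limit $\delta\to 0$. This is handled by the standard Floer compactness plus implicit function theorem argument, which bounds $\|\partial_t v\|_{C^0}$ by a constant times $\delta$ and forces $v$ to be $C^1$-close to a Morse trajectory, where transversality of the linearized Morse operator then supplies the gluing/uniqueness statement. Since all relevant moduli spaces are supported in the compact set $I\times Y_T$ (by the previous lemma) and the ground field is $\F_2$, no further orientation considerations arise, and the bijection on moduli identifies the differentials of $CF^{[T-\epsilon,T+\epsilon]}(H_\delta;\F_2)$ and $CM(f;\F_2)[-\mu_-(S_T)]$, completing the argument.
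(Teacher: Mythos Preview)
Your proof is correct and follows essentially the same approach as the paper: both conjugate by the $S^1$-action via $v(s,t)=\varphi^{-Tt}(u(s,t))$ to reduce the Floer equation for $H_\delta$ to that of the $C^2$-small autonomous Hamiltonian $\delta f$, and then invoke Floer's classical identification of Floer trajectories with Morse flow lines in that regime. The paper carries out the same computation you outline (with the same use of $J$-invariance under $d\varphi^{Tt}$) and likewise deduces transversality for $(H_\delta,J)$ from the Morse--Smale condition on $(\delta f,g_J)$.
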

\begin{proof}
    We start with the   $C^2$-small, autonomous Morse Hamiltonian $\delta \cdot f$. For $C^2$ small autonomous Morse Hamiltonians, all Floer solutions are Morse flow lines (cf., \cite[Proposition 10.1.7.]{audin2014morse}). As explained in the setup above, the pair $(\delta f, g_J)$ is Morse-Smale, and the linearized operator of the Floer equation is surjective \cite[Section 10]{audin2014morse}. 
    
    The Reeb flow gives an $S^1$ action on $I\times Y_T$, so we can define the following map:
    \begin{equation}\label{eq:inverse_S1_action_Floer_sol}
        \Phi:u\mapsto\bar u, \quad \quad \bar u(s,t):= \varphi^{-T\cdot t} u(s,t).
    \end{equation}
    Then,  
    \begin{align*}
        \partial_s \bar u =& d\varphi^{-T\cdot t} \partial_s u,\\
        \partial_t \bar u =& -T\cdot R \circ \varphi^{-T\cdot t} u +d\varphi^{-T\cdot t} \partial_t u.
    \end{align*}
    Since $u$ satisfies the Floer equation for $H_{\delta} = T\cdot \rho +\delta\cdot f \circ \varphi^{-T\cdot t}$ in $\cN$, then $\bar u$ satisfies the Floer equation for the Hamiltonian 
    $\delta \cdot f $. Indeed,
    \begin{align*}
        \partial_s \bar u =& \ d\varphi^{-T\cdot t} \partial_s u = -d\varphi^{-T\cdot t} J(\partial_t u - X_{H_{\delta}})\\
        =& -Jd\varphi^{-T\cdot t}  \big(\partial_t u - T\cdot R - d\varphi^{T\cdot t} X_{\delta f} \circ\varphi^{-T\cdot t}\circ u\big)\\
        =& -J\big(d\varphi^{-T\cdot t}  \partial_t u - T\cdot R -  X_{\delta f} \circ\varphi^{-T\cdot t}\circ u\big)\\
        =& -J\big(\partial_t \bar u +T\cdot R - T\cdot R -X_{\delta f}\circ \bar u\big)\\
        =& -J\big(\partial_t \bar u -X_{\delta f}\circ \bar u\big).
    \end{align*}
    We see that the map (\ref{eq:inverse_S1_action_Floer_sol}) gives a bijection between Floer solutions of $H_\delta$ between $\gamma_q$ and $\gamma_p$ and those of $\delta\cdot f$ between $q,p$, which are simply Morse negative-gradient flow-lines.
\end{proof}
The fact that the linearized operator $L_{(H_\delta, J)}$ associated to $(H_\delta, J)$ is surjective at $u$ connecting $\gamma_q$ and $\gamma_p$ follows from the surjectivity of the linearized operator for $(\delta f, J)$, together with the fact that $L_{(\delta f,J)} = L_{(H_\delta, J)}\circ \Phi^{-1}$ where $\Phi$ is the map from (\ref{eq:inverse_S1_action_Floer_sol}).

\bibliography{refs}
\bibliographystyle{alpha}

\end{document}